\documentclass[11pt]{article}
\usepackage{graphicx, colordvi}
\usepackage{amsfonts}
\usepackage{amssymb}
\usepackage{amsthm,cite,color}
\usepackage{dsfont}
\usepackage{epsfig}
\usepackage{mathrsfs}
\usepackage{amsfonts}
\usepackage{amssymb}
\usepackage{amsmath}
\usepackage{amssymb,amsfonts,amsmath,amsthm,cite,color}
\usepackage{dsfont}
\usepackage{epsfig}
\usepackage{mathrsfs}
\usepackage{longtable}
\usepackage{hyperref}
\hypersetup{
colorlinks=true,
linkcolor=cyan,
filecolor=blue,
urlcolor=red,
citecolor=green,
}

\newtheorem{theo}{Theorem}
\newtheorem{rem}{Remark}
\newtheorem{prop}[theo]{Proposition}
\newtheorem{prob}[theo]{Problem}
\newtheorem{coro}[theo]{Corollary}
\newtheorem{defi}[theo]{Definition}
\newtheorem{lem}[theo]{Lemma}
\newtheorem{exm}[theo]{Example}

\makeatletter \@addtoreset{equation}{section}
\@addtoreset{theo}{section} \makeatother

\newcommand{\bN} { {\mathbb{N}}}

\newcommand{\bQ} { {\mathbb{Q}}}
\newcommand{\bZ} { {\mathbb{Z}}}

\newcommand{\bF} { {\mathbb{F}}}
\newcommand{\bK} { {\mathbb{K}}}

\DeclareMathOperator{\ann}{ann}
\DeclareMathOperator{\hdeg}{hdeg}
\DeclareMathOperator{\ldeg}{ldeg}

\newcommand{\qbinom}[2]{\left[ \begin{matrix} #1 \\ #2 \end{matrix} \right]}
\def\qed{\hfill \rule{4pt}{7pt}}
\def\pf{\noindent {\it Proof.} }

\begin{document}
\begin{center}

 {\large \bf Polynomial reduction for $q$-holonomic sequences}
\end{center}
\begin{center}
{  Rong-Hua Wang}$^{1}$, {Xiao-Ran Yang}$^{2}$  and {Michael X.X. Zhong}$^{3}$

   $^{1,2}$School of Mathematical Sciences\\
   Tiangong University \\
   Tianjin 300387, P.R. China \\
   $^1$wangronghua@tiangong.edu.cn \\
   $^2$2430141422@tiangong.edu.cn \\[10pt]

   $^3$School of Mathematical Sciences\\
   Tianjin University of Technology \\
   Tianjin 300384, P.R. China\\
   zhong.m@tjut.edu.cn
\end{center}

\vskip 6mm \noindent {\bf Abstract.}

This paper provides a (Laurent) polynomial reduction to $q$-holonomic sequences $F_k(q)$.
We first characterize Laurent polynomials $\tilde{p}(x)$ such that the product $\tilde{p}(q^k)F_k(q)$ is summable.
Then the reduction framework is given to decompose any given Laurent polynomial into a summable part and a remainder with lower degree.
Finally, we introduce a power-partible reduction for $q$-holonomic sequences of which the recurrence relation satisfies a certain symmetry condition.
The advantage is that it can not only simultaneously eliminate the highest-degree and lowest-degree terms of a Laurent polynomial satisfying a symmetry condition, but also guarantee the symmetry of the remainder.
As applications, we apply the reduction to $q$-central-Delannoy numbers to derive new $q$-identities and $q$-congruences.

\noindent {\bf Keywords}: polynomial reduction; $q$-holonomic sequence; $q$-power-partible reduction; $q$-central-Delannoy numbers.

\section{Introduction}

The mechanical proof and discovery of combinatorial identities have been central themes in symbolic computation, a paradigm largely established by the seminal work of Wilf and Zeilberger \cite{WZ1990,Zeilberger1990c,Zeil1990,Zeilberger1991}.
The method of creative telescoping, as the core of the WZ theory, provides a powerful algorithmic framework for proving hypergeometric identities.
Within this framework, reduction-based methods play a crucial role by separating the computation of telescopers from the potentially costly calculation of certificates \cite{CHEN2019}, thereby enabling more efficient algorithm design.
This reduction-based approach traces its roots back to the classical work of Ostrogradsky \cite{O1845} and Hermite \cite{Hermite1872} on the integration of rational functions, and has been systematically extended to various classes of functions, including multivariate rational functions \cite{BCCL2010, BLS2013}, hyperexponential functions \cite{BCCLX2013}, algebraic functions \cite{CKK2016}, and D-finite functions \cite{CHKK2018, BCLS2018}.

In the discrete setting, Abramov \cite{Abramov1975} first studied the summability problem for rational functions, which was extended to the trivariate rational case in \cite{CHHLW} with the key idea being to decompose a given rational function into a summable part and a non-summable remainder. Subsequently, Abramov and Petkov\v{s}ek \cite{AP2001, AP2002} and van der Hoeven \cite{Hoeven2018} extended this decomposition to hypergeometric terms and holonomic cases respectively. In recent years, significant advances have been made in reduction-based methods. Chen et al. \cite{CHKL2015} proposed a modified Abramov--Petkov\v{s}ek reduction, which significantly improved algorithmic efficiency by introducing the concept of polynomial reduction, and successfully applied it to the computation of minimal telescopers for bivariate hypergeometric terms.

Of particular interest is the polynomial reduction introduced by Hou, Mu, and Zeilberger \cite{HouMuZeil2021} in 2021, which avoids the multiplicative decomposition needed in \cite{CHKL2015}. This method not only enables the automated proof of hypergeometric identities but also facilitates the generation of infinite families of supercongruences. This polynomial reduction was utilized by Hou and Li \cite{HouLi2021} to obtain new hypergeometric identities.

In the realm of $q$-analogues, Wang and Zhong \cite{WZ2022} developed a $q$-analogue of this reduction, establishing a $q$-rational reduction framework via the $q$-Gosper representation that characterizes the summability of products of rational functions with $q$-hypergeometric terms. As applications, they derived several $q$-analogues of series for $\pi$ and provided a new tool for the automated discovery of $q$-identities.

More recently, Wang and Zhong \cite{WZ2023} extended the polynomial reduction framework to the broader class of holonomic sequences, which includes harmonic numbers, Fibonacci numbers, Domb numbers, Franel numbers, and all hypergeometric terms.
By introducing a difference Lagrange identity, they established a polynomial reduction for holonomic sequences and successfully applied it to derive new $\pi$-series involving Domb numbers and Franel numbers, as well as new families of congruences.

A refined property of polynomial reduction, known as power-partibility, was first observed by Hou, Mu, and Zeilberger \cite{HouMuZeil2021} in the hypergeometric setting, where the reduced polynomial contains only odd or only even powers of $(k-\gamma)$ under a symmetry condition. This property has been proven instrumental in deriving systematically families of supercongruences.
Subsequently, Wang and Zhong \cite{WZ2025, WZ2025Delannoy} extended power-partibility to holonomic sequences, establishing a comprehensive theory with applications to Ap\'ery numbers, Legendre polynomials, and related congruences.

The present paper further extends the polynomial reduction to the $q$-holonomic sequences, a broad class that includes $q$-hypergeometric terms and many other important combinatorial sequences.
We first establish a polynomial reduction framework for $q$-holonomic sequences via the adjoint operator and a corresponding Lagrange identity, characterizing the summability of terms of the form $\tilde{p}(q^k)F_k(q)$, where $\tilde{p}(x)$ is a Laurent polynomial.
As applications, we apply the developed theory to $q$-central-Delannoy numbers to derive new $q$-identities and $q$-congruences.
Finally, we introduce the notion of $q$-power-partibility for $q$-holonomic sequences. A $q$-holonomic sequence $F_k(q)$ is called $q$-power-partible if the coefficients of its associated linear recurrence satisfies a certain symmetry condition.
Under this condition, we develop a reduction algorithm for Laurent polynomials $Q([k])$ satisfying
\[
Q([\gamma+k])=\pm Q([\gamma-k]),
\]
for some rational number $\gamma$.
The key feature of this algorithm is, by appropriately choosing reduction functions, eliminating the highest and lowest degree terms simultaneously.
The procedure can be repeated until no further reduction is possible.
As an application, we generalize a theorem of Pan \cite{Pan2014} from the linear case to arbitrary odd powers, establishing new $q$-congruences.

The remainder of this paper is organized as follows. Section 2 introduces the fundamental concepts of $q$-holonomic sequences and establishes the theoretical foundation of polynomial reduction. Section 3 is devoted to $q$-power-partible reduction, with applications in deriving $q$-congruences on $q$-central Delannoy numbers.

\section{Polynomial reduction with applications}

In this section, we introduce the polynomial reduction for $q$-holonomic sequences and apply it to derive $q$-congruences on $q$-central-Delannoy numbers.

\subsection{Polynomial reduction for $q$-holonomic sequences}

Let $\bK$ be a field of characteristic zero and $q$ an inderterminate.
A sequence $(F_k(q))_{k=0}^{\infty}$ is called \emph{$q$-holonomic} if there exist $J\in\bN=\{0,1,2,\ldots,\}$ and Laurent polynomials $a_i(x)\in\bK(q)[x,x^{-1}]$, $0\leq i\leq J$, with $a_J(x)\neq 0$ such that
\begin{equation}\label{eq:rec homo}
\sum_{i=0}^{J}a_i(q^k)F_{k+i}(q)=0.
\end{equation}
Or, equivalently, if we define the \emph{annihilator} of $F_k(q)$ by
\[
\ann F_k(q):=\left\{L=\sum_{i=0}^{J}a_i(q^k)\sigma^i\in \bK(q)[q^k,q^{-k}][\sigma]\mid L(F_k(q))=0\right\},
\]
where $\sigma$ is the shift operator,
that is, $\sigma^{i} F_k(q)=F_{k+i}(q)$ for any $i\in\bZ$,
then $(F_k(q))_{k=0}^{\infty}$ is $q$-holonomic if and only if
$\ann F_k(q)\neq \{0\}$.
We call $J$ in \eqref{eq:rec homo} the order of the recurrence relation for $F_k(q)$, and the minimum order of all such recurrences is called the \emph{order} of $F_k(q)$.

The class of $q$-holonomic sequences covers a great percentage of combinatorial sequences arising in applications.
In particular, all $q$-hypergeometric terms are $q$-holonomic.
Recently, Wang and Zhong \cite{WZ2022} introduced a reduction for $q$-hypergeometric terms which is a $q$-analogue of the Hou--Mu--Zeilberger reduction introduced in 2021.
For a $q$-hypergeometric term $t_k(q)$, a key step in the reduction is to characterize such polynomials $\tilde{p}(x)\in\bK(q)[x]$ that the product $\tilde{p}(q^k)t_k(q)$ is \emph{$q$-Gosper-summable}, that is,
\[\tilde{p}(q^k)t_k(q)=\Delta (u(q^k)t_k(q)),\]
for some rational function $u(x)\in\bK(q)(x)$, where $\Delta=\sigma-1$ is the difference operator (that is, $\Delta F_k(q)=F_{k+1}(q)-F_k(q)$).

It is natural to consider a similar problem in the $q$-holonomic case.
\begin{prob}
Given a $q$-holonomic sequence $(F_k(q))_{k=0}^{\infty}$ satisfying \eqref{eq:rec homo},
for which Laurent polynomials $\tilde{p}(x)\in\bK(q)[x,x^{-1}]$, the product $\tilde{p}(q^k)F_k(q)$ can be written as
\begin{equation}\label{eq: Abramov--van-Hoeij}
\tilde{p}(q^k)F_k(q)=\Delta\left(\sum_{i=0}^{J-1}u_i(q^k)F_{k+i}(q)\right)
\end{equation}
for some rational functions $u_0(x),u_1(x),\ldots,u_{J-1}(x)\in\bK(q)(x)$?
\end{prob}
To proceed, some notations are needed.
Given $L=\sum_{i=0}^{J}a_i(q^k)\sigma^i$ with $a_i(x)\in\bK(q)[x,x^{-1}]$,
the \emph{adjoint} of $L$ is defined by
\begin{equation*}\label{eq:L}
L^{\ast}=\sum_{i=0}^{J}a_i(q^{k-i})\sigma^{-i}.
\end{equation*}
Then
$
L^{\ast}(p(q^k))=\sum_{i=0}^{J}a_i(q^{k-i})p(q^{k-i})
$
for any $p(x)\in\bK(q)(x)$.
For simplicity of illustration, we also utilize
\[L^{\ast}(p(x))=\sum_{i=0}^{J}a_i(q^{-i}x)p(q^{-i}x)\]
when there is no confusion.
The following lemma due to van der Hoeven \cite[Proposition 3.2]{Hoeven2018} is crucial in our polynomial reduction.

\begin{lem}\label{lem: summable}
Let $L=\sum_{i=0}^{J}a_i(q^k)\sigma^i$ with $a_i(x)\in\bK(q)[x,x^{-1}]$.
Then for any $p(x)\in\bK(q)(x)$ and sequence $F_k(q)$, we have
\begin{equation}\label{eq:Lagrange identity}
p(q^k)L(F_{k}(q))-L^{\ast}(p(q^k))F_{k}(q)=\Delta
\left(\sum_{i=0}^{J-1}u_i(q^k)F_{k+i}(q)\right),
\end{equation}
where
\begin{equation}\label{eq:u_i}
u_i(q^k)=\sum_{j=1}^{J-i}a_{i+j}(q^{k-j})p(q^{k-j}).
\end{equation}
\end{lem}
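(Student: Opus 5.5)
The identity is linear in $L$, so I will prove it for a single term $L=a_i(q^k)\sigma^i$ and then sum over $0\le i\le J$. It suffices to show, for each $i\ge 1$,
\[
p(q^k)a_i(q^k)F_{k+i}-a_i(q^{k-i})p(q^{k-i})F_k=\Delta\Big(\sum_{j=1}^{i}a_i(q^{k-j})p(q^{k-j})F_{k+i-j}\Big).
\]
For $i=0$ both terms on the left are $a_0(q^k)p(q^k)F_k$, so the contribution is zero and the sum on the right is empty.

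The key observation is that the sequence $g_k:=a_i(q^k)p(q^k)F_{k+i}$ satisfies
\[
g_k-g_{k-i}=\sum_{j=1}^{i}\big(g_{k-j+1}-g_{k-j}\big)=\Delta\Big(\sum_{j=1}^{i}g_{k-j}\Big),
\]
which is a plain telescoping sum. Here $g_{k-i}=a_i(q^{k-i})p(q^{k-i})F_k$ and $g_{k-j}=a_i(q^{k-j})p(q^{k-j})F_{k+i-j}$, so this is exactly the single-term identity above.

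Summing over $i$ from $1$ to $J$ gives the right-hand side
\[
\Delta\Big(\sum_{i=1}^{J}\sum_{j=1}^{i}a_i(q^{k-j})p(q^{k-j})F_{k+i-j}\Big).
\]
I then reindex by $m=i-j\in\{0,\dots,J-1\}$, writing $i=m+j$ with $1\le j\le J-m$. The coefficient of $F_{k+m}$ becomes $\sum_{j=1}^{J-m}a_{m+j}(q^{k-j})p(q^{k-j})$, which is precisely $u_m(q^k)$ in \eqref{eq:u_i}. The left-hand sides sum to $p(q^k)L(F_k)-L^\ast(p(q^k))F_k$ by the definition of $L^\ast$.

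There is no real obstacle here. The only care needed is bookkeeping in the reindexing step. Since $p$ is rational, I would also note that all evaluations are taken wherever they are defined.
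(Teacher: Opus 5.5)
Your proof is correct. With $g_k=a_i(q^k)p(q^k)F_{k+i}$, the telescoping identity $g_k-g_{k-i}=\Delta\bigl(\sum_{j=1}^{i}g_{k-j}\bigr)$ handles each term of $L$, and the reindexing $m=i-j$ gives exactly the coefficients $u_m(q^k)$ of the lemma with the correct sign.

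The paper does not prove this lemma itself; it only cites van der Hoeven's Proposition 3.2. Your computation is a self-contained version of the same mechanism, namely the operator identity $f\sigma^i-\sigma^{-i}(f)=\Delta\circ(1+\sigma+\cdots+\sigma^{i-1})\circ\sigma^{-i}(f)$ applied to $f=p\,a_i$. Your remark that the identity holds wherever the values $p(q^{k-j})$ are defined is the appropriate caveat for rational $p$.
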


When $L\in\ann F_k(q)$, identity \eqref{eq:Lagrange identity} reduces to
\begin{equation}\label{eq: summable}
L^{\ast}(p(q^k))F_k(q)=\Delta
\left(-\sum_{i=0}^{J-1}u_i(q^k)F_{k+i}(q)\right).
\end{equation}
If $\tilde{p}(x)=L^{\ast}(p(x))\in\bK(q)[x,x^{-1}]$, then $\tilde{p}(x)$
is a desired polynomial such that \eqref{eq: Abramov--van-Hoeij} holds.
From equality \eqref{eq: summable} we obtain
\begin{equation}\label{eq:rec ann}
\sum_{k=0}^{n-1}L^{\ast}(p(q^k)) F_k(q)
=\left(\sum_{i=0}^{J-1}u_i(1)F_i(q)\right)
-\left(\sum_{i=0}^{J-1}u_i(q^n)F_{n+i}(q)\right),
\end{equation}
where $u_i$ is defined in \eqref{eq:u_i}.

When the order $J$ in \eqref{eq:rec homo} is minimum, it can be directly checked that all rational functions $\tilde{p}(x)\in\bK(q)(x)$ such that \eqref{eq: Abramov--van-Hoeij} holds are of the form $L^{\ast}(p(x))$ with $p(x)\in\bK(q)(x)$.
It may happen that $p(x)\in\bK(q)(x)\setminus\bK(q)[x,x^{-1}]$ but $L^{\ast}(p(x))\in\bK(q)[x,x^{-1}]$.
The following lemma shows that this rarely happens.
\begin{lem}\label{lem:rational->polynomial}
Let $L=\sum_{i=0}^{J}a_i(q^k)\sigma^i$ with $a_i(x)\in \bK(q)[x,x^{-1}],\ 0\leq i\leq J$ and $a_0(x)a_J(x)\neq 0$.
If $p(x)\in\bK(q)(x)$ and
\begin{equation}\label{eq:coprime}
	\gcd (a_0(x),a_J(q^i x))=1,\quad\forall i\in\bN,
\end{equation}
then $L^{\ast}(p(x))\in\bK(q)[x,x^{-1}]$ if and only if $p(x)\in\bK(q)[x,x^{-1}]$.
\end{lem}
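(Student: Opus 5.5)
The "if" direction is immediate: if $p(x)\in\bK(q)[x,x^{-1}]$, then each $a_i(q^{-i}x)p(q^{-i}x)$ is a Laurent polynomial, and therefore so is their sum $L^{\ast}(p(x))$. All the work is in the converse. We argue by contradiction. Suppose $L^{\ast}(p(x))\in\bK(q)[x,x^{-1}]$, but $p(x)$ is not a Laurent polynomial. Then $p$ has a pole at some $\alpha\neq 0$ in an algebraic closure $\overline{\bK(q)}$. Equivalently, in the reduced form $p=f/g$, the denominator $g$ has an irreducible factor $h(x)$ with $h(0)\neq 0$. Poles at $0$ and $\infty$ are harmless because we work with Laurent polynomials.

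The key step is an extremal choice of pole along $q$-orbits. Among the finitely many nonzero poles of $p$, the $q$-orbits $\{\alpha q^m: m\in\bZ\}$ are well defined. Fix one orbit containing poles, and let $\beta=\alpha q^{m_0}$ be the pole in it with $m_0$ maximal. Now examine the term $a_0(x)p(x)$ in $L^{\ast}(p(x))=\sum_{i=0}^J a_i(q^{-i}x)p(q^{-i}x)$.

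The term $p(q^{-i}x)$ has poles at $q^i\gamma$ for poles $\gamma$ of $p$. For $i\ge1$, a pole at $\beta$ would require $q^{-i}\beta$ to be a pole of $p$. That can happen, so maximality at $\beta$ alone is the wrong choice. Instead we choose $\beta$ with $m_0$ \emph{minimal*}, so that $q^{-i}\beta$ is not a pole of $p$ for $i\ge 1$. Then every term with $i\ge1$ is regular at $\beta$.

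Hence, since the sum is a Laurent polynomial, the term $a_0(x)p(x)$ must be regular at $\beta$. Therefore $a_0(\beta)=0$, and in fact $a_0$ cancels the full pole order.

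Next take the maximal element $\beta'=\beta q^{s}$, $s\ge 0$, of the same orbit's pole set. Consider $x_0=q^{J}\beta'$. The term $a_J(q^{-J}x)p(q^{-J}x)$ has a pole at $x_0$ unless $a_J(\beta')=0$. For $i<J$, the term $p(q^{-i}x)$ at $x_0$ requires $q^{J-i}\beta'$ to be a pole, and this is excluded by maximality. So regularity of the sum forces $a_J(\beta')=0$.

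Now $a_0(\beta)=0$ and $a_J(q^{s}\beta)=0$ with $s\in\bN$. So the polynomial $a_J(q^s x)$ vanishes at $\beta$, and $\gcd(a_0(x),a_J(q^s x))\neq 1$. This contradicts \eqref{eq:coprime}, so $p$ has no nonzero poles, i.e.\ $p\in\bK(q)[x,x^{-1}]$.

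The main obstacle is making sure that "regularity forces vanishing of the coefficient" is rigorous. The pole of $p$ at $\beta$ of order $r$ must be cancelled by $a_0$. If $a_0(\beta)\neq0$, the term $a_0p$ has a genuine pole at $\beta$ that no other summand can cancel, because the other summands are regular there. So we do need $a_0(\beta)=0$, and we only use vanishing, not multiplicity.

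We must also handle the case where $q^{-i}\beta$ could coincide with $\beta$ itself. This would need $q^i=1$, which is impossible since $q$ is transcendental over $\bK$. The same fact guarantees that orbits are infinite, so the minimal and maximal elements of a finite pole set within an orbit exist. Finally, the coprimality in \eqref{eq:coprime} is over $\bK(q)$, and a common root in the algebraic closure gives a nontrivial common factor, which is exactly what we need.
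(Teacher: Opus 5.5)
Your proof is correct and is essentially the paper's argument written in terms of poles. The paper clears denominators in $p=r/s$ to get $s(x)\mid a_0(x)s(q^{-1}x)\cdots s(q^{-J}x)$ and $s(x)\mid a_J(x)s(qx)\cdots s(q^Jx)$. It then follows chains of $q$-shifted irreducible factors of $s$ until they divide $a_0$ and $a_J$, producing a common factor of $a_0(x)$ and $a_J(q^{i+j}x)$. Your choice of the minimal and maximal pole in a $q$-orbit reaches the same contradiction in one step, with $s\ge 0$ in place of $i+j$. In a final write-up, drop the abandoned ``maximal'' choice of $\beta$ and use the minimal one from the start.
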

\pf
Suppose $p(x)=\frac{r(x)}{s(x)}$ with $r(x),s(x)\in\bK(q)[x,x^{-1}]$ and $r(x),s(x)$ are coprime in $\bK(q)[x,x^{-1}]$.
If $\tilde{p}(x)=L^{\ast}(p(x))$ is a Laurent polynomial in $\bK(q)[x,x^{-1}]$, then
\begin{equation*}\label{eq:ispolynomial}
	\tilde{p}(x)
	=\sum_{i=0}^{J}a_i(q^{-i}x)p(q^{-i}x)
	=\sum_{i=0}^{J}a_i(q^{-i}x)\frac{r(q^{-i}x)}{s(q^{-i}x)}.
\end{equation*}
Multiplying $s(x)s(q^{-1}x)\cdots s(q^{-J}x)$ on both ends, we obtain
\begin{align*}
	\tilde{p}(x)\prod_{i=0}^{J}s(q^{-i}x)
	=\sum_{i=0}^{J}a_i(q^{-i}x)r(q^{-i}x)\prod_{\substack{0\leq j\leq J\\j\neq i}} s(q^{-j}x).
\end{align*}
Apparently, $s(x)$ is a divisor of the left hand side, then we must have
\begin{align*}
	s(x)\mid a_0(x)r(x)s(q^{-1}x)\cdots s(q^{-J}x).
\end{align*}
From $\gcd(r(x),s(x))=1$ we know that
\begin{equation}\label{eq: division1}
	s(x)\mid a_0(x)s(q^{-1}x)\cdots s(q^{-J}x).
\end{equation}
By a similar argument,
\[s(q^{-J}x)\mid a_J(q^{-J}x)s(x)s(q^{-1}x)\cdots s(q^{-J+1}x),\]
namely,
\begin{equation}\label{eq: division2}
	s(x)\mid a_J(x)s(q^J x)s(q^{J-1}x)\cdots s(qx).
\end{equation}

Suppose that $s(x)$ is not invertible in $\bK(q)[x,x^{-1}]$ and $t(x)$ is an irreducible factor of $s(x)$.
By \eqref{eq: division1}, if $t(x)\nmid a_0(x)$, we have $t(q^{j_1}x)\mid s(x)$ for some $j_1>0$.
Again, by \eqref{eq: division1}, if $t(q^{j_1}x)\nmid a_0(x)$, then $t(q^{j_1+j_2}x)\mid s(x)$ for some $j_2>0$.
Since $s(x)$ can not have infinitely many distinct irreducible factors, there must exist a $j\ge 0$ such that $t(q^jx)\mid a_0(x)$.
By a similar argument, \eqref{eq: division2} guarantees that there exists an $i\ge 0$ such that $t(q^{-i}x)\mid a_J(x)$.
Then $a_0(x)$ and $a_J(q^{i+j}x)$ have a common factor $t(q^{j}x)$, contradicting \eqref{eq:coprime}.
As the converse is clearly true, this completes the proof.
\qed

Combining Lemma \ref{lem: summable} and Lemma \ref{lem:rational->polynomial}, we have the following result.
\begin{theo}\label{thm:rational to polynomial}
Let $F_k(q)$ be a $q$-holonomic sequence of order $J$ and $L=\sum_{i=0}^{J}a_i(q^k)\sigma^i\in\ann F_k(q)$ with
\begin{equation*}
	\gcd (a_0(x),a_J(q^i x))=1,\quad\forall i\in\bN.
\end{equation*}
Then $\tilde{p}(x)\in\bK(q)[x,x^{-1}]$ satisfies the summability condition \eqref{eq: Abramov--van-Hoeij} if and only if
$\tilde{p}(x)=L^{\ast}(p(x))$ for some $p(x)\in\bK(q)[x,x^{-1}]$.
\end{theo}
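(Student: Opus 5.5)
The plan is to prove the two implications separately. The ``if'' direction is immediate from Lemma \ref{lem: summable}, and the ``only if'' direction goes through the minimality of $J$ together with Lemma \ref{lem:rational->polynomial}.

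For sufficiency, suppose $\tilde{p}(x)=L^{\ast}(p(x))$ with $p(x)\in\bK(q)[x,x^{-1}]$. Since $L\in\ann F_k(q)$, identity \eqref{eq: summable} gives
\[\tilde{p}(q^k)F_k(q)=\Delta\Big(\sum_{i=0}^{J-1}(-u_i(q^k))F_{k+i}(q)\Big),\]
with $u_i$ as in \eqref{eq:u_i}. These $u_i$ are Laurent polynomials, hence rational functions, so \eqref{eq: Abramov--van-Hoeij} holds.

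For necessity, suppose $\tilde{p}(q^k)F_k(q)=\Delta\big(\sum_{i=0}^{J-1}v_i(q^k)F_{k+i}(q)\big)$ with $v_i\in\bK(q)(x)$. I will expand the right-hand side as
\[\sum_{i=0}^{J-1}\big(v_i(q^{k+1})F_{k+i+1}(q)-v_i(q^k)F_{k+i}(q)\big).\]
This is a combination of $F_k,\dots,F_{k+J}(q)$, and subtracting the left-hand side yields an operator $M=\sum_{i=0}^{J} m_i(q^k)\sigma^i$ with rational coefficients that annihilates $F_k(q)$. Its leading coefficient is $m_J(q^k)=v_{J-1}(q^{k+1})$.

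Next I compare $M$ with $L$. Because $J$ is the minimal order and $a_J\neq 0$, the operator $M-\frac{m_J(q^k)}{a_J(q^k)}L$ annihilates $F_k(q)$ and has order less than $J$, so it must vanish. Clearing denominators turns a rational-coefficient recurrence of lower order into a Laurent polynomial one, which would contradict minimality. Hence $M=c(q^k)L$ with $c(x)=m_J(x)/a_J(x)\in\bK(q)(x)$. Set $p(x):=-c(q^{-1}x)$, so that $p$ is $c$ with the appropriate sign and shift.

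I then match the operators. I will compare with Lemma \ref{lem: summable}, which shows that $p(q^k)L-L^{\ast}(p(q^k))$ equals $\Delta\circ$(the operator with coefficients $u_i$) as an operator identity. Equating coefficients of $\sigma^0$ determines $\tilde{p}(x)=L^{\ast}(p(x))$, after possibly first subtracting the Lagrange expression to reduce to the case $M=0$. This is the paper's remark that all such $\tilde{p}$ have the form $L^{\ast}(p)$ with $p$ rational.

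Finally, $\tilde{p}$ is a Laurent polynomial, $L^{\ast}(p(x))=\tilde{p}(x)$, and the gcd hypothesis holds, so Lemma \ref{lem:rational->polynomial} forces $p(x)\in\bK(q)[x,x^{-1}]$. One should check that $a_0\neq 0$, which that lemma needs. It follows from minimality: if $a_0=0$, then shifting by $\sigma^{-1}$ would give a recurrence of order $J-1$.

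I expect the main obstacle to be the coefficient-matching step, namely identifying $\tilde{p}$ exactly as $L^{\ast}(p)$ for the $p$ extracted from the leading coefficient. I would handle it by writing $w_i=v_i+u_i(p)$ and showing by downward induction on $i$ that minimality forces every $w_i=0$. The top coefficient vanishes by the choice of $p$; each lower coefficient then vanishes in turn, because otherwise it would produce a nonzero annihilator of order less than $J$.
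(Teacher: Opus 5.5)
Your proposal is correct and follows the paper's route. Sufficiency comes from Lemma~\ref{lem: summable}. For necessity, minimality of $J$ gives $\tilde{p}=L^{\ast}(p)$ with $p\in\bK(q)(x)$; the paper only calls this step ``directly checked,'' and you actually carry it out, adding the worthwhile remark that minimality also forces $a_0\neq 0$, which Lemma~\ref{lem:rational->polynomial} requires. That lemma then makes $p$ a Laurent polynomial.

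The one slip is the shift in your choice of $p$. Since $u_{J-1}(q^k)=a_J(q^{k-1})p(q^{k-1})$ must cancel $v_{J-1}(q^k)$, you need $p(x)=-c(x)=-v_{J-1}(qx)/a_J(x)$ with no extra shift. Your $-c(q^{-1}x)$ would not make $w_{J-1}$ vanish. With the corrected $p$, a single appeal to minimality kills the remaining annihilator of order at most $J-1$, and gives $\tilde{p}=L^{\ast}(p)$.
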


Next we will determine the degrees of $L^{\ast}(p(x))$ for $p(x)\in\bK(q)[x,x^{-1}]$ and $L=\sum\limits_{i=0}^{J}a_i(q^k)\sigma^i$ with coefficients \(a_i(x) \in \mathbb{K}(q)[x, x^{-1}]\).

For any Laurent polynomial $a(x)\in\bK(q)[x,x^{-1}]$,
denote by $\hdeg a(x)$ (resp. $\operatorname{ldeg} a(x)$) the highest (resp. lowest) degree of $x$ in $a(x)$ and
\[\deg a(x)=\hdeg a(x)-\ldeg a(x).\]
The \emph{highest degree} and the \emph{lowest degree} of $L$, denoted by $\hdeg L$ and $\ldeg L$ respectively, are defined as
\begin{equation*}\label{eq:d}
	\hdeg L =\max_{0\leq i \leq J} \{\hdeg a_i(x)\}
	\quad \text{and} \quad
	\operatorname{ldeg} L=\min_{0\leq i \leq J} \{\operatorname{ldeg} a_i(x)\}.
\end{equation*}
The \emph{degree} of $L$ is defined by
\begin{equation*}\label{eq:degL}
\deg L=\hdeg L-\ldeg L.
\end{equation*}
Let
\begin{equation*}\label{eq:f_L^{()t}}
f_L^{(t)}(s)=\sum_{i=0}^{J}[x^{t}](a_i(q^{-i}x))q^{-si},
\end{equation*}
where $[x^{t}](a(x))$ denotes the coefficient of $x^{t}$ in $a(x)$.
Define
\begin{equation*}\label{eq:nonnegative roots-h}
	R_L^{t}=\{s\in\bZ \mid f_L^{(t)}(s)=0\}.
\end{equation*}
Then $L$ is called \emph{highest-degree-degenerated} if $R_{L}^{\hdeg L}\neq\emptyset$ or \emph{lowest-degree-degenerated} if $R_{L}^{\ldeg L}\neq\emptyset$,
which will be denoted as \emph{HDD} or \emph{LDD} for short.
When $R_{L}^{\hdeg L}=R_{L}^{\ldeg L}=\emptyset$, we say $L$ is \emph{nondegenerated}.

Now we are ready to characterize the degrees of $L^{*}(p(x))$ for Laurent polynomials $p(x)$, which is crucial in the polynomial reduction.
\begin{theo}\label{thm:degree}
Let $L=\sum_{i=0}^{J}a_i(q^k)\sigma^i$  with $a_i(x)\in\bK(q)[x,x^{-1}]$ and $p(x)\in\bK(q)[x,x^{-1}]\setminus \{0\}$.
Then
\[
\hdeg L^{\ast}(p(x))=\hdeg L+\hdeg p(x)
\]
except when $L$ is HDD and $\hdeg p(x)\in R_L^{\hdeg L}$, in which case we have $\hdeg L^{\ast}(p(x))<\hdeg L+\hdeg p(x)$,
and
\[\ldeg L^{\ast}(p(x))=\ldeg L+\ldeg p(x)\]
except when $L$ is LDD and $\ldeg p(x)\in R_L^{\ldeg L}$, in which case we have $\ldeg L^{\ast}(p(x))>\ldeg L+\ldeg p(x)$.
\end{theo}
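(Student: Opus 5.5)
The plan is to compute the top and bottom coefficients of $L^{\ast}(p(x))$ directly. Write $d=\hdeg L$, $m=\hdeg p(x)$, and let $c\neq 0$ be the leading coefficient of $p(x)$. By definition,
\[
L^{\ast}(p(x))=\sum_{i=0}^{J}a_i(q^{-i}x)\,p(q^{-i}x).
\]
Substituting $q^{-i}x$ for $x$ rescales coefficients by powers of $q$ but leaves degrees unchanged. So $\hdeg a_i(q^{-i}x)=\hdeg a_i(x)\le d$ and $\hdeg p(q^{-i}x)=m$. Every term therefore has highest degree at most $d+m$, which already gives $\hdeg L^{\ast}(p(x))\le d+m$.

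Next I will extract the coefficient of $x^{d+m}$. In the $i$-th product only the top parts contribute: the coefficient of $x^d$ in $a_i(q^{-i}x)$, which is $[x^d](a_i(q^{-i}x))$ and is zero when $\hdeg a_i<d$, times the coefficient of $x^m$ in $p(q^{-i}x)$, which is $c\,q^{-im}$. Summing over $i$ gives
\[
[x^{d+m}]\,L^{\ast}(p(x))=c\sum_{i=0}^{J}[x^{d}](a_i(q^{-i}x))\,q^{-mi}=c\,f_L^{(d)}(m).
\]
Since $c\neq0$, this coefficient vanishes exactly when $m\in R_L^{\hdeg L}$. This situation requires $R_L^{\hdeg L}\neq\emptyset$, i.e.\ $L$ is HDD. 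If $m\notin R_L^{\hdeg L}$, equality $\hdeg L^{\ast}(p(x))=d+m$ holds. If $m\in R_L^{\hdeg L}$, the top coefficient vanishes and the strict inequality follows. There is a degenerate possibility that $L^{\ast}(p(x))=0$; I would read the strict inequality as vacuous or with $\hdeg 0=-\infty$.

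The lowest-degree statement follows by the symmetric argument. Set $e=\ldeg L$ and $\ell=\ldeg p(x)$ with lowest coefficient $c'\neq 0$. Then every term has lowest degree at least $e+\ell$, and the coefficient of $x^{e+\ell}$ equals $c'\,f_L^{(e)}(\ell)$. This gives equality unless $\ell\in R_L^{\ldeg L}$, in which case the lowest degree is strictly larger.

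None of this is really an obstacle. The only point needing care is the bookkeeping: I must check that $[x^t](a_i(q^{-i}x))$ in the definition of $f_L^{(t)}$ already absorbs the factor $q^{-it}$, so that the remaining factor from $p$ is exactly $q^{-si}$ with $s=m$. I also need to note that indices $i$ with $\hdeg a_i<d$ contribute zero, consistent with the definition.
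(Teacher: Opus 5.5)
Your proposal is correct and follows the paper's proof essentially step for step. Like the paper, you bound $\hdeg L^{\ast}(p(x))\le \hdeg L+\hdeg p(x)$ termwise, then show the coefficient of $x^{\hdeg L+\hdeg p(x)}$ vanishes exactly when $f_L^{(\hdeg L)}(\hdeg p(x))=0$, and treat the lowest degree symmetrically. Keeping the explicit leading-coefficient factor $c$ makes the equivalence slightly more transparent than in the paper, which divides out this nonzero factor without comment.
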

\pf
We know by definition that
$
	L^{*}(p(x)) = \sum_{i=0}^{J} a_i(q^{-i} x) p(q^{-i} x)
$
and
\[
\hdeg( a_i(q^{-i} x) p(q^{-i} x) ) = \hdeg  a_i(x) + \hdeg p(x) \leq \hdeg L + \hdeg p(x).
\]
Hence it is clear that
\[
	\hdeg L^{*}(p(x)) \leq \hdeg L + \hdeg p(x).
\]
Note that $\hdeg\ L^{\ast}(p(x)) < \hdeg L+\hdeg p(x)$ if and only if
\[
	\sum_{i=0}^{J} [x^{\hdeg L}](a_i(q^{-i}x)) [x^{\hdeg p(x)}](p(q^{-i}x))=0,
\]
which is equivalent to
\[
\sum_{i=0}^{J} [x^{\hdeg L}](a_i(q^{-i}x)) q^{-i\cdot\hdeg p(x)}
= f_L^{(\hdeg L)}(\hdeg p(x))=0.
\]
This completes the proof of the first part.	
The second part can be confirmed similarly and thus omitted.
\qed

Let $F_k(q)$ be a $q$-holonomic sequence and $L=\sum_{i=0}^{J}a_i(q^k)\sigma^i\in\ann F_k(q)$ with $a_0(q^k)a_J(q^k)\neq 0$.
By definition, one can see that $L$ is usually nondegenerated.
In this case, we can split any Laurent polynomial $Q(x)\in\bK(q)[x,x^{-1}]$ as
\[Q(x)=\tilde{p}(x)+\tilde{Q}(x),\]
where $\tilde{p}(x)$ is a Laurent polynomial satisfying the summability condition \eqref{eq: Abramov--van-Hoeij} and
$\deg\tilde{Q}(x)<\deg L$.
We call $\tilde{p}(x)$ the summable part and $\tilde{Q}(x)$ the reminder.
This clearly holds when $\deg Q(x)< \deg L$.
In the following, we assume that $\deg Q(x)\ge \deg L$.

First suppose that $L$ is not HDD.
We can choose a $p_0(x)\in\bK(q)[x,x^{-1}]$ with
$\hdeg p_0(x)=\hdeg Q(x)-\hdeg L$
and
$\ldeg p_0(x)\ge \ldeg Q(x)-\ldeg L$, for example, $p_0(x)=x^{\hdeg Q(x)-\hdeg L}$.
Then by Theorem \ref{thm:degree}, we have
\[
\hdeg L^*(p_0(x))=\hdeg Q(x)
\quad\text{and}\quad
\ldeg L^*(p_0(x))\ge \ldeg Q(x).
\]
It is clear that there exists a $b_0\in\bK(q)$ such that
$Q_1(x)=Q(x)-b_0L^*(p_0(x))$ is a Laurent polynomial satisfying
\begin{equation}\label{eq:reduction-h}
\hdeg Q_1(x)<\hdeg Q(x)
\quad\text{and}\quad
\ldeg Q_1(x)\ge \ldeg Q(x).
\end{equation}
Thus $\deg Q_1(x)<\deg Q(x)$.
If $\deg Q_1(x)<\deg L$, one can see that $Q(x)=L^*(b_0p_0(x))+Q_1(x)$ is the required decompositoin.
Otherwise, we can replace $Q(x)$ by $Q_1(x)$, and continue the above process.
After finite steps, one can find $b_s\in\bK(q)$ and Laurent polynomials $p_s(x),\tilde{Q}_h(x)\in\bK(q)[x,x^{-1}]$ such that
\begin{equation}\label{eq:reduction step-h}
Q(x)=\sum_{s}b_{s}L^*(p_{s}(x))+\tilde{Q}_h(x)
\end{equation}
with $\deg \tilde{Q}_h(x)<\deg L$, $\hdeg \tilde{Q}_h(x)< \hdeg Q(x)$ and $\ldeg \tilde{Q}_h(x)\ge \ldeg Q(x)$.
The equality \eqref{eq:reduction step-h} is the required decomposition since 
\[
\tilde{p}(x)=\sum_{s}b_{s}L^*\left(p_{s}(x)\right)
=L^*\left(\sum_{s}b_{s}p_{s}(x)\right)
\]
 satisfies \eqref{eq: Abramov--van-Hoeij} by Lemma \ref{lem: summable}.

When $L$ is not LDD,
we can choose a polynomial $p_0(x)\in\bK(q)[x,x^{-1}]$ with
$\ldeg p_0(x)=\ldeg Q(x)-\ldeg L$
and
$\hdeg p_0(x)\le \hdeg Q(x)-\hdeg L$,
for example, $p_0(x)=x^{\ldeg Q(x)-\ldeg L}$.
Then by Theorem \ref{thm:degree}, we have
\[
\ldeg L^*(p_0(x))=\ldeg Q(x)
\text{ and }
\hdeg L^*(p_0(x))\le \hdeg Q(x).
\]
There exists a $b_0\in\bK(q)$ such that $Q_1(x)=Q(x)-b_0L^*(p_0(x))$ is a Laurent polynomial satisfying
\begin{equation}\label{eq:reduction-l}
\ldeg Q_1(x)>\ldeg Q(x)
\text{ and }
\hdeg Q_1(x)\le \hdeg Q(x).
\end{equation}
Thus $\deg Q_1(x)< \deg Q(x)$.
A similar reduction to \eqref{eq:reduction step-h} then follows as
\begin{equation}\label{eq:reduction step-l}
Q(x)=\sum_{s}b_{s}L^*p_{s}(x)+\tilde{Q}_{\ell}(x)
\end{equation}
with $\deg \tilde{Q}_{\ell}(x)<\deg L$, $\ldeg \tilde{Q}_{\ell}(x)> \ldeg Q(x)$ and $\hdeg \tilde{Q}_{\ell}(x)\le \hdeg Q(x)$.

We call \eqref{eq:reduction step-h} or \eqref{eq:reduction step-l} a \emph{(Laurent) polynomial reduction} with respect to $L$.
It is worth to notice that, when $\deg Q(x)>\deg L$ and $L$ is nongenerated, that is, $L$ is neither HDD nor LDD,
we can choose $p_0(x)$ and $b_0$ such that both \eqref{eq:reduction-h} and \eqref{eq:reduction-l} hold.
This accelerates the reduction process.
In Section 3, we will present a natural scene for the application of this acceleration.

\subsection{Applications to $q$-central-Delannoy numbers}

In this subsection, we take the $q$-central-Delannoy number as an example to illustrate how the polynomial reduction can be utilized to derive arithmetic properties of combinatorial sequences.

For any $n\in\bZ$, the $q$-integer $[n]$ is defined by
\[
[n] = \frac{1 - q^n}{1 - q}.
\]
Clearly, $\displaystyle\lim_{q \to 1} [n] = n$.
For a nonnegative integer $n$ and a positive integer $k$, the $q$-binomial coefficient
\[
\qbinom{n}{k} = \frac{\prod_{1 \leq j \leq k} [n - j + 1]}{\prod_{1 \leq j \leq k} [j]}.
\]
Moreover, we set $\qbinom{n}{0}=1$ and  $\qbinom{n}{k}=0$ if $k<0$.

For $ m, n \in \mathbb{N}$, the Delannoy numbers
\[
D_{m,n} = \sum_{k \in \mathbb{N}} \binom{m}{k} \binom{n+k}{m}
\]
count lattice paths from $(0, 0)$ to $(m, n)$
in which only east $(1, 0)$, north $(0, 1)$, and northeast $(1, 1)$ steps are allowed.
The $n$th \emph{central Delannoy number}
$D_{n,n}$, denoted by $D_n$ for short, is then given by
\[
D_n=\sum_{k=0}^n \binom{n}{k} \binom{n+k}{k}
=\sum_{k=0}^n \binom{n}{k} \binom{2n-k}{n}.
\]
The arithmetic properties of central Delannoy numbers have been extensively studied.
Let $\varepsilon=\pm 1$ and $n$ a positive integer.
Sun \cite{Sun2012JNT} proved that
\begin{equation*}
\sum_{k=0}^{n-1}(2k+1)\varepsilon^k D_k \equiv 0 \pmod{n}.
\end{equation*}
When $p>3$ is a prime, Sun further derived that
\[
\sum_{k=0}^{p-1}(2k+1)(-1)^k D_k \equiv p-\frac{7}{12}p^4B_{p-3}\pmod {p^5}
\]
and
\[
\sum_{k=0}^{p-1}(2k+1)D_k \equiv p+2p^2q_p(2)-p^3q_p(2)^2\pmod {p^4},
\]
where $B_p$ are the Bernoulli numbers and $q_p(2)$ denotes the Fermat quotient $(2^{p-1}-1)/p$.
Later Guo and Zeng \cite{GuoZeng2012} showed that
\begin{equation}\label{eq:GuoZeng2v+1}
	\sum_{k=0}^{n-1}(2k+1)^{2v+1}\varepsilon^{k}D_k \equiv 0 \pmod{n}
\end{equation}
for any $v\in\bN$.

In the following, we will recall $q$-central-Delannoy numbers and apply the polynomial reduction to obtain their arithmetic properties.
In particular, we will obtain $q$-congruences which reduce to \eqref{eq:GuoZeng2v+1} by taking $q\to1$.

In 2014, Pan \cite{Pan2014} defined the generalized $q$-Ap\'ery polynomial
\begin{equation*}\label{Pan-Generlized}
	A_n^{(\alpha)}(x;q) = \sum_{k=0}^n q^{\alpha\left(\binom{k}{2} - kn\right)}
	\qbinom{n}{k}^{\alpha}
	\qbinom{n+k}{k}^{\alpha}x^k
\end{equation*}
and then proved
\begin{equation}\label{eq:Pan-A_k}
\sum_{k=0}^{n-1}q^{n-1-k}[2k+1]A_k^{(\alpha)}(x;q)^m\equiv 0\pmod {[n]},
\end{equation}
which confirmed a conjecture of Guo and Zeng \cite{GuoZeng2012a} by taking $q=1$.
One can see
\begin{equation}\label{q-Delannoy}
D_{n}(q):=A_n^{(1)}(1;q)= \sum_{k=0}^{n} q^{\binom{k}{2} - kn}
\qbinom{n}{k}\qbinom{n+k}{k}
\end{equation}
is a $q$-analogue of the $n$th central Delannoy number.

Let $\varepsilon\in\{ 1,-1 \}$, by $q$-Zeilberger algorithm, we find that
\begin{align}\label{qDe1L}
	L =  &q^{-k}(1+q^{k+1})[k+2] \sigma^2+ \varepsilon(q+q^{k+2} + q^{2k+4} + 3q^{k+3})[-2k-3] \sigma \nonumber\\
  &+  q^{-k} (1+q^{k+2})[k+1] \in \ann \varepsilon^kD_{k}(q).
\end{align}

\begin{theo}\label{eq:Theo-qDe1}
Let $L$ be given by \eqref{qDe1L}
and $n$ a positive integer.
For any Laurent polynomial $p(x) \in \bK(q)[x,x^{-1}]$ and $n\ge 1$, we have
\begin{align}\label{qDe1theo}
&    \sum_{k=0}^{n-1}  L^{\ast}(p([k])) \varepsilon^k D_{k}(q)\nonumber \\
=& \varepsilon^{n}[n]q^{1-n} \left((1 + q^{n+1}) p([n-1])\varepsilon D_{n-1}(q)-q(1 + q^{n-1})  p([n-2])D_{n}(q) \right).
\end{align}
\end{theo}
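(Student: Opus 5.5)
The identity should follow directly from the summation formula \eqref{eq:rec ann}, specialized to the second-order operator \eqref{qDe1L} with $F_k(q)=\varepsilon^kD_k(q)$. Two boundary terms then have to be simplified: the one at $k=n$ by the recurrence, and the one at $k=0$ by the explicit values of $D_0(q)$ and $D_1(q)$.

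First I would note that $[k]=(1-q^k)/(1-q)$, so $p([k])=P(q^k)$ with $P(x)=p\bigl((1-x)/(1-q)\bigr)\in\bK(q)[x,x^{-1}]$. Hence Lemma \ref{lem: summable} and \eqref{eq:rec ann} apply with $p$ replaced by $P$. I take $L\in\ann\varepsilon^kD_k(q)$ as given by the $q$-Zeilberger computation. Write $L=a_2(q^k)\sigma^2+a_1(q^k)\sigma+a_0(q^k)$, where $a_2(x)=x^{-1}(1+qx)\frac{1-q^2x}{1-q}$ and $a_0(x)=x^{-1}(1+q^2x)\frac{1-qx}{1-q}$. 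Formula \eqref{eq:u_i} gives
\[
u_0(q^k)=a_1(q^{k-1})P(q^{k-1})+a_2(q^{k-2})P(q^{k-2}),\qquad u_1(q^k)=a_2(q^{k-1})P(q^{k-1}).
\]
Then \eqref{eq:rec ann} expresses $\sum_{k=0}^{n-1}L^*(p([k]))F_k$ as $\bigl(u_0(1)F_0+u_1(1)F_1\bigr)-\bigl(u_0(q^n)F_n+u_1(q^n)F_{n+1}\bigr)$.

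For the upper boundary term, I would eliminate $F_{n+1}$ using the recurrence at $k=n-1$:
\[
a_2(q^{n-1})F_{n+1}=-a_1(q^{n-1})F_n-a_0(q^{n-1})F_{n-1}.
\]
After substitution the $a_1$-terms cancel, leaving
\[
u_0(q^n)F_n+u_1(q^n)F_{n+1}=a_2(q^{n-2})P(q^{n-2})F_n-a_0(q^{n-1})P(q^{n-1})F_{n-1}.
\]
Evaluating the coefficients gives $a_2(q^{n-2})=q^{2-n}(1+q^{n-1})[n]$ and $a_0(q^{n-1})=q^{1-n}(1+q^{n+1})[n]$. Substituting $F_n=\varepsilon^nD_n(q)$ and $F_{n-1}=\varepsilon^n\cdot\varepsilon D_{n-1}(q)$, and negating, yields exactly the right-hand side of \eqref{qDe1theo}.

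The remaining step, which I expect to be the only delicate one, is showing that the lower boundary term $u_0(1)F_0+u_1(1)F_1$ vanishes for every $p$. The summand $a_2(q^{-2})P(q^{-2})F_0$ vanishes because $a_2(q^{-2})$ contains the factor $[0]=0$. The other two summands share the factor $P(q^{-1})$, so it suffices to check that $a_1(q^{-1})F_0+a_2(q^{-1})F_1=0$. Using $a_2(q^{-1})=2q$, $a_1(q^{-1})=\varepsilon(2q+4q^2)[-1]=-\varepsilon(2+4q)$, $F_0=1$ and $F_1=\varepsilon D_1(q)=\varepsilon(2+q^{-1})$ from \eqref{q-Delannoy}, this becomes the direct check $-\varepsilon(2+4q)+2q\varepsilon(2+q^{-1})=0$, which completes the proof.
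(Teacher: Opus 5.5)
Your proposal is correct and is essentially the paper's own proof. You apply \eqref{eq:rec ann}, show that the lower boundary term $u_0(1)F_0+u_1(1)F_1$ vanishes (the paper only calls this easily checked; you verify it explicitly via $[0]=0$ and $D_1(q)=2+q^{-1}$), and eliminate $D_{n+1}(q)$ using the recurrence at $k=n-1$.

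One small inaccuracy: $p\bigl((1-x)/(1-q)\bigr)$ is a Laurent polynomial in $x$ only when $p$ has no negative powers. This does not affect the argument, since Lemma~\ref{lem: summable} is stated for rational $p$, and the paper in any case treats $p([k])$ as a Laurent polynomial in $q^k$ (cf.\ $p_i([k])=q^{-ki}+q^{(k+3)i}$ later).
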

\pf
By equality \eqref{eq:rec ann} and the easily checked fact that
$u_{0}(1)D_{0}(q) +  u_{1}(1)\varepsilon D_{1}(q) = 0$,
we have
\begin{equation}\label{qD1fact}
	\sum_{k=0}^{n-1} L^{\ast}(p([k])) \varepsilon^{k}D_{k}(q) = -\left( u_{0}(q^n)\varepsilon^{n} D_{n}(q) + u_{1}(q^n) \varepsilon^{n+1}D_{n+1}(q)\right),
\end{equation}
where
$u_0(q^n) =  q^{2-n}(1 + q^{n-1})[n]p([n-2])-\varepsilon q^{-2n}(1 + q^n + 3q^{n+1} + q^{2n+1})[2n+1] p([n-1])  $ and
$u_1(q^n) = q^{1-n}(1 + q^n) [n+1]p([n-1]).$
As $L \in \ann \varepsilon^{k} D_{k}(q)$, one has
\begin{align}\label{bianliangdaihuan1}
&-q^{1-n}(1+q^n)[n+1]\varepsilon^{n+1}D_{n+1}(q)\nonumber\\
=&(q^2+q^{1-n})[n] \varepsilon^{n-1}D_{n-1}(q)+q(1+q^{n}+q^{2n+1}+3q^{n+1})[-2n-1] \varepsilon^{n+1}D_{n}(q)
\end{align}
for any $n \geq 1$.
Substituting \eqref{bianliangdaihuan1} into \eqref{qD1fact} derives \eqref{qDe1theo}.
\qed

\begin{coro}\label{LM:q-Delannoy}
		Let $L$ be given as in \eqref{qDe1L}.
For any $p(x) \in \mathbb{Z}[q,q^{-1}][x,x^{-1}]$, we have
	\[	
	\sum_{k=0}^{n-1} L^{\ast}(p([k])) \varepsilon^{k} D_{k}(q) \equiv 0 \pmod {[n]}.
	\]
\end{coro}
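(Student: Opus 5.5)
The plan is to read the congruence off the closed form in Theorem \ref{eq:Theo-qDe1}. That theorem gives
\[
\sum_{k=0}^{n-1} L^{\ast}(p([k]))\varepsilon^kD_k(q)=\varepsilon^{n}[n]q^{1-n}\,T_n,
\]
where
\[
T_n=(1+q^{n+1})\,p([n-1])\,\varepsilon D_{n-1}(q)-q(1+q^{n-1})\,p([n-2])\,D_n(q).
\]
Since $[n]$ is an explicit factor of the right-hand side, it suffices to show that $q^{1-n}T_n$ lies in $\mathbb{Z}[q,q^{-1}]$. Here the congruence modulo $[n]$ is understood in $\mathbb{Z}[q,q^{-1}]$, that is, $A\equiv 0 \pmod{[n]}$ means $A/[n]\in\mathbb{Z}[q,q^{-1}]$.

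The key steps, in order, are the following.

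First, I show that $D_k(q)\in\mathbb{Z}[q,q^{-1}]$ for every $k\ge 0$. This follows from \eqref{q-Delannoy}: the $q$-binomial coefficients are polynomials in $q$ with integer coefficients, and each $q^{\binom{j}{2}-jk}$ is a Laurent monomial.

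Second, I check that $p([m])\in\mathbb{Z}[q,q^{-1}]$ for $m=n-1$ and $m=n-2$. The argument $[m]$ is a polynomial in $q$ with integer coefficients, so positive powers cause no trouble. However, $p$ is a Laurent polynomial in $x$, so negative powers $[m]^{-1}$ can occur, and $[m]^{-1}$ is not a Laurent polynomial in $q$ (and is undefined when $m=0$). This is the main obstacle.

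The resolution I would try first is to use the paper's convention, stated just before the theorem, that "$L^\ast(p(x))$" is shorthand for $L^\ast(p(q^k))$ with $x=q^k$. Under this reading the substitution is really $x=q^{m}$ rather than $x=[m]$, so $p(q^{m})\in\mathbb{Z}[q,q^{-1}]$ directly, since $q^m$ is a unit. This is consistent with the shape of the $u_i(q^n)$ appearing in the proof of Theorem \ref{eq:Theo-qDe1}. If instead one insists on the literal evaluation $x=[k]$, I would restrict to $p\in\mathbb{Z}[q,q^{-1}][x]$, where integrality is immediate.

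Finally, with either interpretation in place, the factors $(1+q^{n+1})$, $q(1+q^{n-1})$, $\varepsilon$ and $q^{1-n}$ are all Laurent polynomials with integer coefficients. Hence $q^{1-n}T_n\in\mathbb{Z}[q,q^{-1}]$, and the sum is $[n]$ times an element of $\mathbb{Z}[q,q^{-1}]$, which is the claimed congruence modulo $[n]$.

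For $n=1$ the expression involves $p([-1])$ and the term $D_0$, both of which are still Laurent polynomials under the $x=q^m$ reading. So this case needs no separate treatment, and it is also covered by the identity in Theorem \ref{eq:Theo-qDe1}, which holds for all $n\ge1$.
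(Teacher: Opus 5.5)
Your argument is correct and is the paper's approach: the corollary has no separate proof there and is read off the closed form in Theorem~\ref{eq:Theo-qDe1}, whose right-hand side is $[n]$ times an element of $\mathbb{Z}[q,q^{-1}]$ once $D_k(q)\in\mathbb{Z}[q,q^{-1}]$ and the evaluations of $p$ are integral. Your caveat about negative powers is also right: under the literal substitution $x=[k]$ the term $a_0(1)p([0])$, with $a_0(1)=1+q^2\neq 0$, already occurs at $k=0$, so for genuinely Laurent $p$ only the reading $x=q^k$ is well defined, whereas for polynomial $p$ (as in the paper's example) your integrality argument works under either reading.
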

Taking different $p(x)$ in Corollary \ref{LM:q-Delannoy} leads to different $q$-congruences.
This provides a mechanical way to generate new $q$-congruences.
In the following, we show that even when $p(x)\in\mathbb{Q}[q,q^{-1}][x]$, Theorem \ref{eq:Theo-qDe1} can still give $q$-identities which are $q$-analogues of known congruences.

For $\varepsilon\in \{1,-1\}$, let
\[
\tilde{L} = (k+2)\sigma^2 -3\varepsilon(2k+3)\sigma + (k+1).
\]
It can be checked that $\tilde{L}(\varepsilon^k D_k)=0$.
The adjoint operator $\tilde{L}^{\ast}$ is given by
\[
\tilde{L}^{\ast} = k\sigma^{-2} -3\varepsilon(2k+1)\sigma^{-1} + (k+1).
\]
In \cite{WZ2025Delannoy}, the first and the third authors proved that 	\begin{equation}\label{Dkpt}
	\tilde{L}^{\ast}(x_s(k)) = (1-3\varepsilon)(2k + 1)^{s+1}
	+ 2 \sum_{j=1}^{\left\lfloor \frac{s+1}{2} \right\rfloor} e_j^{(s)} (2k + 1)^{s+1-2j},
	\end{equation}
where $ x_{s}(k) =(2k+3)^{s}$ and
$
	e_j^{(s)} = \binom{s}{2j} 2^{2j-1} + \binom{s}{2j-1} 2^{2j-2} \in \mathbb{Z}
$
	for $ s \in \mathbb{N} $ and $ j = 1, 2, \dotsc, \left\lfloor \frac{s+1}{2} \right\rfloor $.
Taking $s=2v$ in \eqref{Dkpt} shows
\begin{equation*}\label{eq:(2k+1)^{2v+1}}
(2k + 1)^{2v+1} = \frac{1}{1-3\varepsilon} \tilde{L}^{\ast}(x_{2v}(k))
+ \frac{2}{3\varepsilon-1}\sum_{j=1}^{v} e_j^{(2v)}(2k+1)^{2(v-j)+1}.
\end{equation*}
Then by induction one can see for each $v\in\bN$,
\[
(2k+1)^{2v+1}=\tilde{L}^{\ast}\left(p^{(\varepsilon)}_v(k)\right)
\]
for a polynomial $p^{(\varepsilon)}_v(k)$ in $k$ satisfying
\[
p^{(\varepsilon)}_v(k) = \frac{1}{1-3\varepsilon} x_{2v}(k)
+ \frac{2}{3\varepsilon-1}\sum_{j=1}^{v} e_j^{(2v)}p^{(\varepsilon)}_{v-j}(k).
\]
Clearly $p^{(\varepsilon)}_0(k)=\frac{1}{1-3\varepsilon}$.
When $v\in\bN^{+}$, $p^{(\varepsilon)}_v(k)$ can be determined recursively.
By definitions of $x_{s}(k)$ and $e_j^{(s)}$, we know
\begin{equation*}\label{eq:p_v}
p^{(1)}_v(k)=2\sum_{j=1}^{2v} c^{(v)}_j k^{j} + \frac{c^{(v)}_0}{2}
\quad \text{and} \quad
p^{(-1)}_v(k)=\sum_{j=1}^{2v} t^{(v)}_j k^{j} + \frac{t^{(v)}_0}{4},
\end{equation*}
where $c^{(v)}_j,\ t^{(v)}_j \in \bZ$ for $0\leq j \leq 2v$.

\begin{exm}\label{ex:q-analogDelannoy}
Equality \eqref{qDe1theo} reduces to Guo and Zeng's congruence \eqref{eq:GuoZeng2v+1} when $p([k])=\frac{1}{2}  p^{(\varepsilon)}_v([k])$ and $q\to 1$.
\end{exm}
\pf
Taking $p([k])=\frac{1}{2}p^{(\varepsilon)}_v([k])$ and $q\to 1$ in \eqref{qDe1theo},
the left hand side becomes
\[
\sum_{k=0}^{n-1}\tilde{L}^{\ast}(p^{(\varepsilon)}_v(k))\varepsilon^{k}D_{k}
=\sum_{k=0}^{n-1}(2k+1)^{2v+1}\varepsilon^{k}D_{k},
\]
and the right hand side becomes
\[
\varepsilon^{n}n\left(\varepsilon p^{(\varepsilon)}_v(n-1)D_{n-1}-p^{(\varepsilon)}_v(n-2)D_{n}\right).
\]
Then by the definition of $p^{(\varepsilon)}_v(k)$, we get
\[
\sum_{k=0}^{n-1} (2k+1)^{2v+1} D_k
    \equiv \frac{c^{(v)}_0}{2} n \bigl( D_{n-1} - D_n \bigr) \equiv 0\pmod{n}
\]
and
\[
\sum_{k=0}^{n-1} (2k+1)^{2v+1} (-1)^kD_k
\equiv \frac{t^{(v)}_0}{4} (-1)^{n+1} n \bigl( D_{n-1} + D_n \bigr)\equiv 0 \pmod{n}
\]
since $2\mid (D_n - D_{n-1})$ and $4\mid (D_n +D_{n-1})$.
\qed

\section{$q$-Power-partibility and applications}

Power-partible reduction was initiated by Hou, Mu and Zeilberger \cite{HouMuZeil2021} in 2021 for hypergeometric terms.
This was later extended by Wang and Zhong~\cite{WZ2025,WZ2025Delannoy} to holonomic sequences and applied in mechanical discovery and proof of arithmetic properties of combinatorial sequences like Ap\'ery numbers and Delannoy numbers.

In this section, we introduce a $q$-power-partible reduction for $q$-holonomic sequences with annihilators satisfying a symmetry property.
For these $q$-holonomic sequences, we can not only apply the reduction introduced in Section 2 to Laurent polynomials to eliminate simultaneouly the highest and lowest terms, but also characterize the structure of the Laurent polynomial after reduction.

\subsection{$q$-Power-partible reduction for $q$-holonomic sequences}

In the following, we assume that $\bF=\bK(q^{\frac{1}{n}})$ for some $n\in\bN^+$.
Define $[\gamma]=\frac{1-q^\gamma}{1-q}$ if $q^\gamma\in \bF$.

\begin{defi}\label{def:q-power-partible}
Let	$L = \sum_{i=0}^{J} a_i([k])\sigma^i\in\bF[q^k,q^{-k}][\sigma]\in \ann F_k(q)$ with $a_0([k]) a_J([k]) \neq 0$.
If there exist an $\varepsilon \in \{1,-1\}$ and a $q^\gamma \in \bF$ such that
\begin{equation}\label{eq:gamma-symmetry}
a_i([\gamma + k]) = \varepsilon \cdot a_{J-i}([\gamma - k - J]),
\quad \forall k\in\bZ,
i = 0,1,\dots,J.
\end{equation}
Then $L$ or $F_k(q)$ is called \emph{$q$-power-partible with respect to $(\gamma,\varepsilon)$}.
\end{defi}

\begin{rem}\label{lem:frac-eps}
One only need to check equality \eqref{eq:gamma-symmetry} for $i=0,1,\dots,\left\lfloor \frac{J}{2} \right\rfloor$.
Since for
$\left\lfloor \frac{J}{2} \right\rfloor +1 \leq i \leq J$,
we have $0\le J-i\le \left\lfloor \frac{J}{2} \right\rfloor$ and thus
\[
a_{J-i}([\gamma +k]) = \varepsilon \cdot
a_{i}([\gamma -k-J]).
\]
Replacing $k$ with $-k-J$ in the above equality yields that equality \eqref{eq:gamma-symmetry} also holds for $\left\lfloor \frac{J}{2} \right\rfloor +1 \leq i \leq J$.
\end{rem}

By the symmetry property \eqref{eq:gamma-symmetry} in the definition of $q$-power-partibility, it is direct to obtain the following proposition.
\begin{prop}\label{prop:L}
If $L$ is $q$-power-partible, we have $\hdeg L=-\ldeg L$.
\end{prop}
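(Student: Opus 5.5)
The plan is to rewrite the symmetry condition \eqref{eq:gamma-symmetry} as an identity between Laurent polynomials in the single variable $x=q^k$. Under this identity, the highest and lowest degrees of the coefficients will be exchanged under $i\mapsto J-i$.

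First I fix notation. Each coefficient $a_i([k])$ is a Laurent polynomial in $q^k$, so write $a_i([k])=A_i(q^k)$ with $A_i(x)\in\bF[x,x^{-1}]$; then $\hdeg L=\max_i \hdeg A_i(x)$ and $\ldeg L=\min_i \ldeg A_i(x)$. Since $[\gamma+k]$ corresponds to $q^{\gamma+k}=q^{\gamma}x$ and $[\gamma-k-J]$ corresponds to $q^{\gamma-J}x^{-1}$, condition \eqref{eq:gamma-symmetry} reads
\[
A_i(q^{\gamma}q^{k})=\varepsilon\, A_{J-i}(q^{\gamma-J}q^{-k}),\qquad \forall k\in\bZ .
\]
Both sides are Laurent polynomials in $q^k$. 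Because $q$ is an indeterminate, the values $q^k$ for $k\in\bZ$ are infinitely many distinct elements of $\bF$. Two Laurent polynomials that agree on all of them must coincide, so we get the formal identity
\[
A_i(q^{\gamma}x)=\varepsilon\, A_{J-i}(q^{\gamma-J}x^{-1})\quad\text{in } \bF[x,x^{-1}].
\]

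Next I compare degrees on each side. Substituting $x\mapsto cx$ with $c=q^\gamma\neq 0$ multiplies each coefficient by a nonzero power of $c$, so it preserves $\hdeg$ and $\ldeg$. Substituting $x\mapsto c'x^{-1}$ with $c'=q^{\gamma-J}$ sends a monomial $x^m$ to a nonzero multiple of $x^{-m}$, and $\varepsilon=\pm1$ is a unit. Hence, whenever $A_i\neq 0$ (equivalently $A_{J-i}\neq 0$),
\[
\hdeg A_i(x)=-\ldeg A_{J-i}(x),\qquad \ldeg A_i(x)=-\hdeg A_{J-i}(x).
\]

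Finally I take extrema over $i$. The map $i\mapsto J-i$ is a bijection of $\{0,\dots,J\}$, and it also permutes the set of indices with nonzero coefficient. Therefore
\[
\hdeg L=\max_i \hdeg A_i=\max_i\bigl(-\ldeg A_{J-i}\bigr)=-\min_j \ldeg A_j=-\ldeg L .
\]

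The only step needing care is the passage from the pointwise identity over $k\in\bZ$ to a formal Laurent-polynomial identity. This holds because $q$ is transcendental over $\bK$, so a nonzero Laurent polynomial cannot vanish at infinitely many distinct points $q^k$. The rest is bookkeeping, with zero coefficients $a_i$ excluded from the maximum and minimum.
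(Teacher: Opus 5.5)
Your proof is correct and follows the paper's route. The paper only remarks that the proposition follows directly from the symmetry condition \eqref{eq:gamma-symmetry}. Your argument supplies the omitted verification: you pass to the identity $A_i(q^{\gamma}x)=\varepsilon A_{J-i}(q^{\gamma-J}x^{-1})$ in $\bF[x,x^{-1}]$, read off $\hdeg A_i=-\ldeg A_{J-i}$, and take extrema over $i$, handling both the pointwise-to-formal step and the zero coefficients properly.
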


Next, we will show that when $L$ is $q$-power-partible, one can find Laurrent polynomials $u([k])$ such that $L^{\ast}(u([k]))$ satisfy certain symmetry property.
\begin{theo}\label{v=cv}
	Suppose that $L=\sum_{i=0}^{J} a_i([k])\sigma^i\in\bF[q^k,q^{-k}][\sigma]$ is $q$-power-partible with respect to $(\gamma,\varepsilon)$
and for some $\tilde{\varepsilon} \in \{1,-1\}$, $u([k])\in\bF[q^k,q^{-k}]$ satisfies
\begin{equation}\label{eq:u}
u([\gamma+k]) = \tilde{\varepsilon}\cdot u([\gamma-k-J]).
\end{equation}
Then $v([k])=L^{\ast}(u([k]))$ satisfies
\begin{equation*}\label{eq:v}
v([\gamma+k])=c \cdot  v([\gamma-k]),
\end{equation*}
where $c=\varepsilon\tilde{\varepsilon}\in \{1,-1\}$.
\end{theo}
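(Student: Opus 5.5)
Direct computation from the definition of the adjoint. Write
\[
v([k])=L^{\ast}(u([k]))=\sum_{i=0}^{J}a_i([k-i])\,u([k-i]),
\]
since $L^{\ast}(p(q^k))=\sum_i a_i(q^{k-i})p(q^{k-i})$ and the coefficients are viewed as functions of $[k]$ (equivalently of $q^k$). Evaluate at $[\gamma+k]$, i.e.\ replace $k$ by $\gamma+k$:
\[
v([\gamma+k])=\sum_{i=0}^{J}a_i([\gamma+(k-i)])\,u([\gamma+(k-i)]).
\]
Substituting $q^k\mapsto q^{\gamma+k}$ is a legitimate evaluation because $q^\gamma\in\bF$. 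The identities \eqref{eq:gamma-symmetry} and \eqref{eq:u} hold for all integers $k$. Since $q^k$ is transcendental over $\bF$, they are in fact identities of Laurent polynomials in $q^k$, so they may be applied with $k$ replaced by $k-i$.

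Next I apply both symmetries with $k$ replaced by $k-i$. From \eqref{eq:gamma-symmetry},
\[
a_i([\gamma+k-i])=\varepsilon\, a_{J-i}([\gamma-k+i-J]),
\]
and from \eqref{eq:u},
\[
u([\gamma+k-i])=\tilde{\varepsilon}\, u([\gamma-k+i-J]).
\]
Hence
\[
v([\gamma+k])=\varepsilon\tilde{\varepsilon}\sum_{i=0}^{J}a_{J-i}([\gamma-k-(J-i)])\,u([\gamma-k-(J-i)]).
\]

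Finally I reindex with $j=J-i$, which runs over $0,\dots,J$. The sum becomes
\[
\sum_{j=0}^{J}a_j([(\gamma-k)-j])\,u([(\gamma-k)-j]),
\]
and this is exactly $v([\gamma-k])$ by the displayed formula for $v$ with $k$ replaced by $\gamma-k$. This gives $v([\gamma+k])=c\,v([\gamma-k])$ with $c=\varepsilon\tilde{\varepsilon}$.

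The only delicate point is the bookkeeping of the shifts. One must make sure the offset $-J$ in \eqref{eq:gamma-symmetry} and in \eqref{eq:u} lines up with the shift $-(J-i)$ produced by the reindexing, which it does. Beyond that the argument is routine.
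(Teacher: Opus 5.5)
Your argument is correct and is essentially the paper's proof: expand $v([\gamma+k])=\sum_{i=0}^{J}a_i([\gamma+k-i])\,u([\gamma+k-i])$, apply both symmetries with $k$ replaced by $k-i$, and reindex by $j=J-i$ to obtain $\varepsilon\tilde{\varepsilon}\, v([\gamma-k])$. Your transcendence remark is harmless but unnecessary, since the shift $k\mapsto k-i$ stays in $\bZ$, where the hypotheses are assumed to hold.
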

\pf
Since $L$ is $q$-power-partible with respect to $(\gamma,\varepsilon)$, we have
\begin{equation}\label{eq:qpp}
a_i([\gamma + k-i]) = \varepsilon \cdot a_{J-i}([\gamma - k +i- J]), \quad 0\leq i\leq J.
\end{equation}
Clearly  $v([k])=L^{\ast}(u([k])) = \sum_{i=0}^J a_{i}([k-i]) u([k-i])$.
Then equalities \eqref{eq:u} and \eqref{eq:qpp} lead to
\begin{align*}
	v([\gamma+k])
	&= \sum_{i=0}^{J} a_i([\gamma+k-i]) u([\gamma+k-i]) \\
	&= \sum_{i=0}^{J} \varepsilon\tilde{\varepsilon} \cdot a_{J-i}([\gamma-k+i-J]) u([\gamma-k+i-J]) \\
	&= c  \sum_{i=0}^{J} a_{J-i}([\gamma-k-(J-i)]) u([\gamma-k-(J-i)]) \\
	&= c  \sum_{i=0}^{J} a_i([\gamma-k-i]) u([\gamma-k-i]) \\
	&= c \cdot v([\gamma-k]).
\end{align*}
\qed

Given $q^\gamma,q^\delta\in\bF$ and $c\in\{1,-1\}$,
denote
\begin{equation*}\label{Space}
S_{\gamma,\delta}^{\langle c \rangle}=
\{Q([k])\in\bF[q^k,q^{-k}] \mid Q([\gamma+k])=c\cdot Q([\delta-k])\}.
\end{equation*}
The set $S_{\gamma,\delta}^{\langle c \rangle}$ is clearly a linear space over $\bF$ and will be written as $S_{\gamma}^{\langle c \rangle}$ for short when $\gamma=\delta$.

An explicit characterization of Laurent polynomials in $S_{\gamma,\delta}^{\langle c \rangle}$ is given as follows.
\begin{prop}\label{prop:T-symmetry}
For a Laurent polynomial $Q([k])\in\bF[q^k,q^{-k}]$,
$Q([k])\in S_{\gamma,\delta}^{\langle c \rangle}$ if and only if it can be written as
\begin{equation*}\label{eq:coe}
Q([k]) = \sum_{i=-I}^{I} e_i q^{ki}
\end{equation*}
for some $I\in\bN$, $e_i\in\bF$ and
$e_i=c\cdot q^{-(\gamma +\delta)i}  e_{-i}$, $0\leq i\leq I$.
\end{prop}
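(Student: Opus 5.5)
The plan is to pass from the $q$-integer notation to an ordinary Laurent polynomial in the variable $x=q^k$, and then compare coefficients. Since $Q([k])\in\bF[q^k,q^{-k}]$, we may write $Q([k])=\sum_i e_i q^{ki}$ as a finite sum with $e_i\in\bF$. Put $I=\max\{|\hdeg|,|\ldeg|\}$ of this expression in $x=q^k$, and set $e_i=0$ for the missing indices. This gives a representation $Q([k])=\sum_{i=-I}^{I}e_iq^{ki}$ with symmetric index range, which is the form in the statement.

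Next I will compute both sides of the defining relation of $S_{\gamma,\delta}^{\langle c \rangle}$. Substituting $k\mapsto \gamma+k$ multiplies $q^{ki}$ by $q^{\gamma i}$, so
\[
Q([\gamma+k])=\sum_{i=-I}^{I}e_iq^{\gamma i}q^{ki}.
\]
Substituting $k\mapsto\delta-k$ and reindexing $i\mapsto -i$ gives
\[
Q([\delta-k])=\sum_{i=-I}^{I}e_iq^{\delta i}q^{-ki}=\sum_{i=-I}^{I}e_{-i}q^{-\delta i}q^{ki}.
\]
Here $q^{\gamma},q^{\delta}\in\bF$, so both expressions are again Laurent polynomials in $q^k$ with coefficients in $\bF$.

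The only point needing care is the passage from an identity holding for all $k\in\bZ$ to an equality of coefficients. Since $q$ is an indeterminate, the values $q^k$ for $k\in\bZ$ are infinitely many distinct elements. A Laurent polynomial in $x$ that vanishes at infinitely many points must be zero, as one sees after multiplying by a power of $x$. Hence $Q([\gamma+k])=c\,Q([\delta-k])$ for all $k$ holds if and only if $e_iq^{\gamma i}=c\,e_{-i}q^{-\delta i}$ for every $-I\le i\le I$. This is equivalent to $e_i=c\,q^{-(\gamma+\delta)i}e_{-i}$. Conversely, if these coefficient relations hold, the same computation read backwards shows $Q\in S_{\gamma,\delta}^{\langle c \rangle}$.

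Finally, it remains to justify that it suffices to impose the relation for $0\le i\le I$. The relation for index $-i$ reads $e_{-i}=c\,q^{(\gamma+\delta)i}e_i$. Multiplying by $c\,q^{-(\gamma+\delta)i}$ and using $c^2=1$ turns it into the relation for $i$. So the relations for negative indices are equivalent to those for positive indices, and this completes the plan.
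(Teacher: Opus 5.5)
Your proposal is correct and follows essentially the same route as the paper: expand $Q$ in powers of $q^k$, compute $Q([\gamma+k])$ and $Q([\delta-k])$, reindex $i\mapsto -i$, and compare coefficients. The differences are cosmetic. You pad to a symmetric index range with zero coefficients, whereas the paper derives $I_1=I_2$ from the nonvanishing extreme coefficients. You also spell out two points the paper leaves implicit: why coefficient comparison is valid, and why the relations for negative indices are redundant.
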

\pf
As $Q([k])$ is a Laurent polynomial in $q^k$, one can write it in the form
\[
Q([k]) = \sum_{i=-I_1}^{I_2} e_i q^{ki}, \quad
I_1,I_2 \in \mathbb{N}, e_i \in \bF,
\]
with $e_{-I_1} e_{I_2} \neq 0$.
Then it can be seen that
\[Q([\gamma+k]) = \sum_{i=-I_1}^{I_2} e_i q^{\gamma i} q^{ki}\]
and
\[
Q([\delta-k]) = \sum_{i=-I_1}^{I_2} e_i q^{\delta i} q^{-ki}
	= \sum_{i=-I_2}^{I_1} e_{-i} q^{-\delta i} q^{ki}.
\]
Then $Q([\gamma+k]) = c \cdot Q([\delta-k])$  forces $I_1 = I_2 =I$ for some $I\in\bN$ and
\[
e_iq^{\gamma i} = c  \cdot e_{-i}q^{-\delta i}, \quad -I \le i \le I,
\]
and vice versa.
\qed

\begin{rem}
If $c=-1$, then $e_0(q) = -e_0(q)$ leads to $e_0(q)=0$.
\end{rem}

Now we are able to illustrate the process of $q$-power-partible reduction.
\begin{theo}\label{eq:qpp-process}
Suppose that $Q([k])\in S_{\gamma}^{\langle c \rangle}$ and $L\in\bF[q^k,q^{-k}][\sigma]$ is nondegenerated with order $J$ and $q$-power-partible with respect to $(\gamma,\varepsilon)$.
Then there exist $p([k])\in S_{\gamma,\gamma-J}^{\langle c\varepsilon\rangle}$ and $\tilde{Q}([k])\in S_{\gamma}^{\langle c \rangle}$ with $\deg \tilde{Q}(x)\leq \deg L$ such that
\begin{equation}\label{eq:qpp-reduction}
Q([k])=L^{\ast}(p([k]))+\tilde{Q}([k]).
\end{equation}
When $c\varepsilon=1$, we can further guarantee that $\deg \tilde{Q}(x)<\deg L$.
\end{theo}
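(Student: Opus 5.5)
We argue by strong induction on $\deg Q(x)$, where $x=q^k$. If $\deg Q(x)\le \deg L$ (respectively $<\deg L$ when $c\varepsilon=1$), we take $p=0$ and $\tilde Q=Q$. Otherwise, by Proposition~\ref{prop:T-symmetry}, $Q([k])=\sum_{i=-I}^{I}e_iq^{ki}$ with $e_i=c\,q^{-2\gamma i}e_{-i}$. Write $d=\hdeg L=-\ldeg L$, which is valid by Proposition~\ref{prop:L}, so $\deg L=2d$ and $\deg Q=2I$.

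The basic reduction step uses the symmetric reduction function
\[
p_0([k])=q^{(I-d)k}+c\varepsilon\, q^{(I-d)(2\gamma-J)}\,q^{-(I-d)k}.
\]
Applying Proposition~\ref{prop:T-symmetry} with $\delta=\gamma-J$, the coefficient relation $e'_i=c\varepsilon\, q^{-(2\gamma-J)i}e'_{-i}$ is satisfied, so $p_0\in S^{\langle c\varepsilon\rangle}_{\gamma,\gamma-J}$. Equivalently, $p_0$ satisfies \eqref{eq:u} with $\tilde\varepsilon=c\varepsilon$. Theorem~\ref{v=cv} then gives $v=L^\ast(p_0)\in S^{\langle c\rangle}_\gamma$, because $\varepsilon\tilde\varepsilon=c$.

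Next we read off the degrees of $v$. When $I-d>0$, Theorem~\ref{thm:degree}, together with nondegeneracy, gives $\hdeg v=d+(I-d)=I$ from the top monomial of $p_0$ and $\ldeg v=-I$ from its bottom monomial. The two monomials do not interfere, since each contributes at most to the range $[-I+2(\cdot),\,I-\cdots]$ from the other side. The leading coefficient of $v$ is therefore nonzero, and we put $Q_1=Q-b_0v$ with $b_0=e_I/[x^I]v$. Since both $Q$ and $v$ lie in $S^{\langle c\rangle}_\gamma$, the symmetry of their coefficients, $e_{-I}=c\,q^{2\gamma I}e_I$ and likewise for $v$, forces the $x^{-I}$ coefficient of $Q_1$ to vanish as well. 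Hence $Q_1\in S^{\langle c\rangle}_\gamma$ with $\deg Q_1\le 2I-2$. Induction on $Q_1$ then yields \eqref{eq:qpp-reduction}, with $p=b_0p_0+\cdots$ lying in the linear space $S^{\langle c\varepsilon\rangle}_{\gamma,\gamma-J}$.

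The main obstacle is the boundary case $I=d$, i.e.\ $\deg Q=\deg L$. There $p_0=1+c\varepsilon$ is a constant. When $c\varepsilon=-1$ it vanishes, and indeed the only admissible $p$ of degree $0$ is $0$, because by the remark after Proposition~\ref{prop:T-symmetry} its constant term must be zero. No further reduction is possible, which is why the statement only claims $\deg\tilde Q\le\deg L$ in this case. When $c\varepsilon=1$, $p_0=2$ is nonzero, and nondegeneracy with $s=0$ gives $\hdeg L^\ast(1)=d$ and $\ldeg L^\ast(1)=-d$. The same cancellation then reduces $Q$ to degree $<\deg L$, which gives the sharpened conclusion. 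In the $I>d$ step we must also check that the top and bottom monomials of $p_0$ do not both contribute to the extreme degrees. This holds because their exponents differ by $2(I-d)>0$, so $\hdeg L^\ast(q^{-(I-d)k})\le d-(I-d)<I$, and symmetrically at the bottom.
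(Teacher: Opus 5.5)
Your proposal is correct and follows essentially the same route as the paper. Your reduction function is a scalar multiple of the paper's $p_{I-d}([k])=q^{-k(I-d)}+c\varepsilon\, q^{(J-2\gamma)(I-d)}q^{k(I-d)}$, and it is combined, as in the paper, with Theorem~\ref{v=cv}, Theorem~\ref{thm:degree}, the coefficient symmetry of Proposition~\ref{prop:T-symmetry}, and a final step with the constant $p_0$ when $c\varepsilon=1$. Your explicit argument that the $x^{-I}$ coefficient cancels together with the $x^{I}$ one, which the paper only asserts, is a welcome addition; the discussion of non-interfering monomials is unnecessary, since Theorem~\ref{thm:degree} applies to $p_0$ directly.
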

\pf
Suppose now $\deg Q(x)>\deg L$.
By Proposition \ref{prop:L} and Proposition \ref{prop:T-symmetry}, we can assume that
\[
\hdeg L=-\ldeg L=d
\quad \text{and}\quad
\hdeg Q(x)=-\ldeg Q(x)=I
\]
for some $d,I\in\bN$ with $I>d$.
Let
\[
p_0([k])=\begin{cases}
           1, & \mbox{if } c\varepsilon=1 \\
           0, & \mbox{otherwise}.
         \end{cases}
\]
and
\begin{equation*}
p_i([k])=q^{-ki}+c\varepsilon\cdot q^{(J-2\gamma)i}q^{ki},\ i\in\bN^{+}.
\end{equation*}
Proposition \ref{prop:T-symmetry} shows that $p_i([\gamma+k])=c\varepsilon \cdot p_i([\gamma-k-J])$, that is $p_i([k])\in S_{\gamma,\gamma-J}^{\langle c\varepsilon\rangle}$ for $i\in\bN$.
Thus $v_i([k])=L^{\ast}(p_i([k]))\in S_{\gamma}^{\langle c \rangle}$ by Theorem \ref{v=cv} and
\[
\hdeg v_i(x)=d+i,\quad \ldeg v_i(x)=-d-i
\]
by Theorem \ref{thm:degree}.
Then there exists $b_{I-d}\in\bF$ so that
\[
Q_{0}([k])=Q([k])-b_{I-d}v_{I-d}([k])\in S_{\gamma}^{\langle c \rangle}
\]
and
$I_0=\hdeg Q_{0}(x)=-\ldeg Q_{0}(x)<I$.

We first consider the case when $c\varepsilon=-1$.
If $I_0\leq d$, we are done with the decomposition
$Q([k])=L^{\ast}(b_{I-d}p_{I-d}([k]))+Q_{0}([k])$.
Otherwise, we continue the above process to $Q_{0}([k])$ to find $b_{I_0-d}\in\bF$ so that
\[
Q_{1}([k])=Q_0([k])-b_{I_0-d}v_{I_0-d}([k])\in S_{\gamma}^{\langle c \rangle}
\]
and
$I_1=\hdeg Q_{1}(x)=-\ldeg Q_{1}(x)<I_0$.
If $I_1\leq d$, we are done with
$Q([k])=L^{\ast}(p([k]))+Q_{1}([k])$, where
$p([k])=b_{I-d}p_{I-d}([k])+b_{I_0-d}p_{I_0-d}([k])$.
Otherwise, we continue the above process to $Q_{1}([k])$.
After finite steps, we find a decomposition satisfying
\eqref{eq:qpp-reduction}.

In the case of $c\varepsilon=1$, we can reduce one step further by the Laurent polynomial $v_0([k])=L^{\ast}(p_0([k]))$ of degree $d$ to get a $\tilde{Q}([k])$ with $\deg \tilde{Q}(x)<\deg L$.
This completes the proof.
\qed

\subsection{Applications}
Recall that Pan's $q$-congruence \eqref{eq:Pan-A_k} implies that
\begin{equation}\label{eq:Pan2014D1}
	\sum_{k=0}^{n-1} q^{n-1-k} [2k+1] D_{k}(q) \equiv 0 \pmod{[n]},
\end{equation}
where $D_k(q)$ are the $q$-central Delannoy numbers given by \eqref{q-Delannoy}.
It is straightforward to check that the annihilator $L$ of $D_k(q)$ given by \eqref{qDe1L} with $\varepsilon=1$ is not HDD and $q$-power-partible with respect to $(-\frac{1}{2},-1)$.

For any $\gamma,b \in \bQ$ and $m \in \bN$, we define the Laurent polynomial
\begin{equation}\label{f_{m,b}}
  f_{m,b}^{(\gamma)}([k]) = q^{m(-k+b)}[2k-2\gamma]^m = ([k+b-2\gamma] - [-k+b])^m.
\end{equation}
Note that
$f_{m,b}^{(\gamma)}([k])\in  S_{\gamma}^{\langle(-1)^m\rangle}$ and $q^{(n-1-k)(2v+1)} [2k+1]^{2v+1}=f_{2v+1,n-1}^{(-\frac{1}{2})}([k])$.
This motivates the discovery of the following generalization of \eqref{eq:Pan2014D1}.
\begin{theo}\label{Pan-generalization}
	For any $v\in \bN$, we have
	\[
	\sum_{k=0}^{n-1} q^{(n-1-k)(2v+1)} [2k+1]^{2v+1} D_k(q) \equiv 0 \pmod{[n]}.
	\]
\end{theo}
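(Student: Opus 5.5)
The plan is to apply the $q$-power-partible reduction of Theorem~\ref{eq:qpp-process} to $Q([k])=f_{2v+1,n-1}^{(-\frac12)}([k])=q^{(n-1-k)(2v+1)}[2k+1]^{2v+1}$. The operator is $L$ from \eqref{qDe1L} with $\varepsilon=1$, which is $q$-power-partible with respect to $(\gamma,\varepsilon)=(-\frac12,-1)$. Since $Q\in S_{-1/2}^{\langle -1\rangle}$, we have $c=-1$ and $c\varepsilon=1$. This is the favourable case of Theorem~\ref{eq:qpp-process}, so we obtain
\[
Q([k])=L^{\ast}(p([k]))+\tilde Q([k]),\qquad \tilde Q\in S_{-1/2}^{\langle -1\rangle},\quad \deg\tilde Q<\deg L,
\]
with $p\in S^{\langle 1\rangle}_{-1/2,-5/2}$.

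We would first compute $\hdeg L=-\ldeg L=d$. From \eqref{qDe1L}, the coefficients $q^{-k}(1+q^{k+1})[k+2]$ and $(q+\cdots)[-2k-3]$ have $q^k$-degree range $[-1,1]$ after rescaling, and the middle one has range $[-2,2]$. Writing out the shape of $\tilde Q$ via Proposition~\ref{prop:T-symmetry}, with $I\le d$ and $e_0=0$, it should be a combination of $q^{ik}-q^{i}q^{-ik}$-type terms for $1\le i\le d$. We would also check nondegeneracy. The paper asserts $L$ is not HDD, and the LDD condition is symmetric by Proposition~\ref{prop:L} and the symmetry.

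Summing over $0\le k\le n-1$ and using Theorem~\ref{eq:Theo-qDe1} with $\varepsilon=1$, we get
\[
\sum_{k=0}^{n-1}Q([k])D_k(q)=[n]q^{1-n}\bigl((1+q^{n+1})p([n-1])D_{n-1}-q(1+q^{n-1})p([n-2])D_n\bigr)+\sum_{k=0}^{n-1}\tilde Q([k])D_k(q).
\]
The first term is divisible by $[n]$, provided $p$ has coefficients in $\mathbb{Z}[q^{\pm1/2}]$, or at least coefficients whose denominators are coprime to $[n]$. The second term has to be handled separately, and the dependence on $n$ must be tracked. Since $Q$ depends on $n$ through the factor $q^{(n-1)(2v+1)}$, which is a scalar, we can reduce $q^{-k(2v+1)}[2k+1]^{2v+1}$ once and multiply back. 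The remainder $\tilde Q$ then lies in a small space, spanned by one or two explicit symmetric Laurent polynomials such as $q^{-k}[2k+1]=f^{(-1/2)}_{1,0}$ and possibly $f^{(-1/2)}_{3,0}$ if $d\ge 3$. By induction on $v$, using that the reduction is linear and that $f_{2v+1}$ reduces into this span, it suffices to show the claim for the basis elements of the remainder space. For $f_{1,n-1}$ this is exactly Pan's congruence \eqref{eq:Pan2014D1}. If the remainder space has dimension larger than one, we would try to reduce the extra basis elements via an explicit $L^\ast$-image as in Theorem~\ref{eq:Theo-qDe1}, computing $L^{\ast}$ of small $p$ such as $q^{-k}+q^{k+3}$.

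The main obstacle is integrality. We must show that the coefficients $b_i$ produced in the reduction, and the coefficients of $\tilde Q$ relative to $f_{1}$, have no denominators sharing factors with $[n]$, that is, with cyclotomic polynomials $\Phi_m(q)$ for $m\mid n$, $m>1$. Each reduction step divides by the leading coefficient of $v_i=L^\ast(p_i)$, which is $f_L^{(d)}(i)$ up to a unit. We would compute this explicitly. It should be of the form $\pm q^{a}(1-q^{b_i})$ times a unit, or better, a monomial plus something invertible. If it is a power of $q$ times a constant, integrality is immediate. Otherwise we would mimic the $q=1$ argument of Example~\ref{ex:q-analogDelannoy}, identifying the parity-type divisibilities $q$-analogously. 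As a fallback, we would clear the bounded denominators by multiplying $p$ by a suitable polynomial coprime to $[n]$.
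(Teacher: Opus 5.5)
Your route is the same as the paper's: reduce $q^{(n-1-k)(2v+1)}[2k+1]^{2v+1}$ by $L^{\ast}(p_i)$, with $p_i([k])=q^{-ki}+q^{(k+3)i}$ and $p_0=1$ (the case $c\varepsilon=1$). You then sum against $D_k(q)$ via Theorem~\ref{eq:Theo-qDe1} and Corollary~\ref{LM:q-Delannoy}, and finish the remainder with Pan's congruence \eqref{eq:Pan2014D1}. However, the step you single out as the main obstacle, integrality of the reduction coefficients, is left open, and it is precisely what the paper's proof supplies by direct computation.

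The degrees $\hdeg L=-\ldeg L=2$ are attained only by the middle coefficient $a_1$. Hence $f_L^{(-2)}(s)=-q^{-s}/(1-q)$ is a single nonzero term, which also gives nondegeneracy. Concretely, $[q^{-k(i+2)}]\bigl((1-q)L^{\ast}(p_i([k]))\bigr)=-q^{i}$, a unit of $\mathbb{Z}[q,q^{-1}]$. So every reduction coefficient lies in $q^{(2v+1)n}(1-q)^{-2v}\mathbb{Z}[q,q^{-1}]$. These denominators are harmless because $\gcd(1-q,[n])=1$ in $\mathbb{Q}[q]$ and $[n]$ is monic.

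This check cannot be skipped, and your fallback would not replace it. Suppose some leading coefficient contained $\Phi_m(q)$ with $m>1$. Then for every $n$ divisible by $m$, any factor that clears that denominator shares $\Phi_m(q)$ with $[n]$. So no multiplier coprime to $[n]$ can repair the argument; the congruence would only follow modulo $[n]/\Phi_m(q)$.

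The remainder step should also be tightened. With $d=2$ and $c\varepsilon=1$, Theorem~\ref{eq:qpp-process} gives $\deg\tilde Q<4$, i.e.\ $I\le 1$, not $I\le d$. Proposition~\ref{prop:T-symmetry} with $\gamma=\delta=-\frac12$ and $c=-1$ forces $e_0=0$ and $e_1=-q\,e_{-1}$. Hence $\tilde Q([k])=\tilde c(q)\,(q^{-k}-q^{k+1})=\tilde c(q)(1-q)q^{-k}[2k+1]$. Your stated basis shape has $q^{k}$ and $q^{-k}$ swapped: $q^{k}-q\,q^{-k}$ is not in $S_{-1/2}^{\langle -1\rangle}$. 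The remainder space is therefore one-dimensional and spanned by $f_{1,n-1}^{(-\frac12)}$ up to a factor $(1-q)q^{1-n}$. No $f_{3}$ term and no induction on $v$ are needed; Pan's congruence handles the remainder directly.
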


For the proof of Theorem \ref{Pan-generalization}, we need the following two lemmas.
The first one gives an explicit expansion of $f_{m,b}^{(\gamma)}([k])$.
\begin{lem}\label{eq:fmb-coe}
Let $f_{m,b}^{(\gamma)}([k])$ be defined as in \eqref{f_{m,b}}.
Then
\begin{equation*}
f_{m,b}^{(\gamma)}([k])=\frac{q^{mb}}{(1-q)^{m}}\sum_{\substack{i=-m\\2\mid (m+i)}}^{m} \binom{m}{\frac{m+i}{2}} (-1)^{\frac{m+i}{2}} q^{-\gamma (m+i)} \ q^{ki}.
\end{equation*}
\end{lem}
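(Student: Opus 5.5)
Start from the second expression in \eqref{f_{m,b}}. By definition of the $q$-integer,
\[
[k+b-2\gamma]-[-k+b]=\frac{q^{-k+b}-q^{k+b-2\gamma}}{1-q}=\frac{q^{b}}{1-q}\left(q^{-k}-q^{-2\gamma}q^{k}\right).
\]
Before expanding, I would check that this agrees with the first expression in \eqref{f_{m,b}}. Since
\[
q^{-k+b}[2k-2\gamma]=\frac{q^{-k+b}(1-q^{2k-2\gamma})}{1-q},
\]
this equals the same quantity, so the two expressions coincide after raising to the $m$-th power. The problem is therefore reduced to expanding the $m$-th power of a two-term expression:
\[
f_{m,b}^{(\gamma)}([k])=\frac{q^{mb}}{(1-q)^m}\left(q^{-k}-q^{-2\gamma}q^{k}\right)^m .
\]

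Next, apply the binomial theorem with $j$ factors of $-q^{-2\gamma}q^k$ and $m-j$ factors of $q^{-k}$. This gives
\[
\left(q^{-k}-q^{-2\gamma}q^{k}\right)^m=\sum_{j=0}^{m}\binom{m}{j}(-1)^j q^{-2\gamma j}\,q^{k(2j-m)}.
\]

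Then reindex by the exponent of $q^k$, setting $i=2j-m$. As $j$ runs over $0,\dots,m$, the index $i$ runs over $-m,\dots,m$ with $i\equiv m \pmod 2$, i.e.\ $2\mid(m+i)$, and $j=\frac{m+i}{2}$. Substituting gives $\binom{m}{\frac{m+i}{2}}$, the sign $(-1)^{\frac{m+i}{2}}$, and $q^{-2\gamma j}=q^{-\gamma(m+i)}$, which are exactly the stated coefficients.

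There is no real obstacle here. The only points needing care are the sign and the $q^{-2\gamma}$ factor coming from the second term, and the parity restriction in the reindexing. As a sanity check, the resulting coefficients satisfy the relation of Proposition \ref{prop:T-symmetry} with $\delta=\gamma$ and $c=(-1)^m$. This is consistent with the stated membership $f_{m,b}^{(\gamma)}([k])\in S_\gamma^{\langle(-1)^m\rangle}$, but it is not needed for the proof.
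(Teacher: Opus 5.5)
Your proof is correct and follows essentially the same route as the paper: rewrite $f_{m,b}^{(\gamma)}([k])$ as $\frac{q^{mb}}{(1-q)^m}$ times an $m$-th power of a binomial, expand by the binomial theorem, and reindex with $i=2j-m$. The only difference is cosmetic: the paper expands $(1-q^{2k-2\gamma})^m$ and then distributes $q^{-mk}$, whereas you factor out $q^{b}$ first, and you additionally check that the two defining expressions in \eqref{f_{m,b}} agree, which the paper takes for granted.
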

\pf
By definition, one can see
\begin{align*}
	f_{m,b}^{(\gamma)}([k])
	&= \frac{q^{m(-k+b)} }{(1-q)^{m}} (1-q^{2k-2\gamma})^{m} \\
	&= \frac{q^{m(-k+b)}}{(1-q)^{m}}\sum_{j=0}^{m} \binom{m}{j} (-1)^{j} q^{2j(k-\gamma)} \\
	&= \frac{q^{mb}}{(1-q)^{m}}\sum_{j=0}^{m} \binom{m}{j} (-1)^{j} q^{-2\gamma j} \, q^{k(2j-m)}\\
    &= \frac{q^{mb}}{(1-q)^{m}}\sum_{\substack{i=-m\\2\mid (m+i)}}^{m} \binom{m}{\frac{m+i}{2}} (-1)^{\frac{m+i}{2}} q^{-\gamma (m+i)} \ q^{ki}
\end{align*}
by taking $i=2j-m$ in the final step.
\qed

\begin{lem}
Let $v,n\in\bN^{+}$ and $L$ be given by \eqref{qDe1L} with $\varepsilon=1$.
Then there exist $c(q),c_i(q)\in\bZ[q,q^{-1}]$ such that
\begin{equation}\label{eq:L^ast}
f_{2v+1,n-1}^{(-\frac{1}{2})}([k])=\sum_{i=0}^{2v-1} \frac{c_i(q)q^{(2v+1)n}}{(1-q)^{2v}}  L^{\ast}(p_i([k]))
+\frac{c(q)q^{2vn}}{(1-q)^{2v}}f_{1,n-1}^{(-\frac{1}{2})}([k]),
\end{equation}
where $p_i([k])=q^{-ki}+q^{(k+3)i}$ for $i\in\bN^{+}$ and $p_0([k])=1$.
\end{lem}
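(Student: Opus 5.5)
Our plan is to run the $q$-power-partible reduction of Theorem~\ref{eq:qpp-process} explicitly on $Q([k])=f_{2v+1,n-1}^{(-\frac12)}([k])$, with $\gamma=-\frac12$, $\varepsilon=-1$ and $J=2$, while keeping track of the coefficient ring. Since $m=2v+1$ is odd, $Q\in S_{-1/2}^{\langle -1\rangle}$, so $c\varepsilon=1$. The reduction functions of the theorem are then $p_i([k])=q^{-ki}+q^{(J-2\gamma)i}q^{ki}=q^{-ki}+q^{(k+3)i}$ and $p_0=1$, which are exactly the $p_i$ in the statement.

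\textbf{Step 1: the reduction functions.} First read off from \eqref{qDe1L} that $\hdeg L=-\ldeg L=d=1$ in $x=q^k$; the coefficients $q^{-k}(1+q^{k+2})[k+1]$ etc.\ have $q^k$-degrees in $\{-1,0,1\}$. Next compute the leading and trailing coefficients of $v_i=L^\ast(p_i)$, namely $f_L^{(1)}(\pm i)$ times the corresponding coefficient of $p_i$. The key claim is that $v_i$ has $\hdeg=-\ldeg=i+1$, and that its top coefficient is a unit in $\bZ[q,q^{-1}]$ times $(1-q)^{-1}$ (up to $q$-powers from $q^{3i}$). Concretely, $f_L^{(1)}(s)=[x^1]\bigl(a_0(x)+a_1(q^{-1}x)q^{-s}+a_2(q^{-2}x)q^{-2s}\bigr)$. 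The $x^1$ coefficients are of the form $\pm q^{a}/(1-q)$, so $f_L^{(1)}(s)$ is $(1-q)^{-1}$ times a Laurent polynomial in $q$. We need that polynomial to be, up to a monomial, a product of factors like $(1\pm q^{s})$, or at least a unit up to such factors; nondegeneracy only guarantees that it is nonzero.

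\textbf{Step 2: descending the odd degrees.} By Lemma~\ref{eq:fmb-coe}, $Q$ has only odd powers $q^{ki}$, $|i|\le 2v+1$. Its coefficients are $q^{(2v+1)(n-1)}(1-q)^{-(2v+1)}$ times integer binomials times $q$-powers, and they satisfy the symmetry of Proposition~\ref{prop:T-symmetry}. Also $v_i=L^\ast(p_i)\in S_{-1/2}^{\langle-1\rangle}$ by Theorem~\ref{v=cv}, and its $q^k$-exponents lie in $\{-i-1,\dots,i+1\}$ with the parity of $i+1$ (check this parity from the structure of $L$; otherwise intermediate even terms appear and are handled by the same step). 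We then subtract $b_{2v}v_{2v}$, $b_{2v-1}v_{2v-1}$, and so on down to $v_0$, as in the proof of Theorem~\ref{eq:qpp-process}. Each step kills the top and bottom terms simultaneously, thanks to the symmetry. What remains has degree $\le 1$ in $q^k$ and lies in $S_{-1/2}^{\langle-1\rangle}$. By Proposition~\ref{prop:T-symmetry} it is therefore a scalar multiple of $q^{-k}-q^{-1}q^{k}$, which is a multiple of $f_{1,n-1}^{(-1/2)}([k])$ by Lemma~\ref{eq:fmb-coe}. This gives the shape of \eqref{eq:L^ast}.

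\textbf{Step 3: integrality and the prefactors.} This is the main obstacle. We have to show that every $b_i$ lies in $q^{(2v+1)n}(1-q)^{-2v}\bZ[q,q^{-1}]$ and the final scalar in $q^{2vn}(1-q)^{-2v}\bZ[q,q^{-1}]$. The $n$-dependence is harmless: $Q$ is $q^{(2v+1)(n-1)}$ times an $n$-free Laurent polynomial and the reduction is linear. Likewise $f_{1,n-1}$ carries $q^{n-1}$, which accounts for $q^{2vn}$ after absorbing $q$-powers into $c(q)$. For the $(1-q)$-powers, $Q$ has $(1-q)^{-(2v+1)}$ and each $v_i$ has leading coefficient $\sim(1-q)^{-1}$. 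So $b_i$ picks up $(1-q)^{-2v}$, provided the leading coefficient of $(1-q)v_i$ is a unit in $\bZ[q,q^{-1}]$. We therefore need to compute $f_L^{(1)}(s)$ explicitly and verify that it equals $\pm q^{a}(1-q)^{-1}$ for the relevant $s$. We expect a telescoping cancellation among $q+q^{k+2}+q^{2k+4}+3q^{k+3}$ and the outer coefficients that leaves a single monomial at the extreme degree. If instead it turns out to be $(1-q)^{-1}$ times a non-unit such as $(1+q^{s})$, we would fall back to tracking the denominators and absorbing them into the constants, possibly weakening the claim. Once the leading coefficients are units, an induction on the degree keeps all coefficients integral, and the final remainder coefficient $c(q)$ inherits integrality.
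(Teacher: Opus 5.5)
There is a genuine gap. Your overall route matches the paper's: run the reduction of Theorem~\ref{eq:qpp-process} with $p_i([k])=q^{-ki}+q^{(k+3)i}$, clear denominators by $q^{-(2v+1)n}(1-q)^{2v+1}$, and recognise the surviving element of $S_{-1/2}^{\langle -1\rangle}$ of degree $\le 1$ as a multiple of $q^{-k}-q^{k+1}$. However, Step~1 contains an error that propagates. The $\sigma$-coefficient $(q+q^{k+2}+q^{2k+4}+3q^{k+3})[-2k-3]$ contains both $q^{2k}$ and $q^{-2k}$. Hence $\hdeg L=-\ldeg L=2$, not $1$; only $a_0$ and $a_2$ live in degrees $\{-1,0,1\}$.

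Consequently $L^\ast(p_i)$ has $\hdeg=-\ldeg=i+2$, and your chain $v_{2v},\dots,v_0$ is off by one: $v_{2v}$ has degree $2v+2$ and cannot cancel the top term of $Q$. The correct chain $v_{2v-1},\dots,v_0$ lowers the degree from $2v+1$ to $\le 1$. That is why the sum in the statement stops at $i=2v-1$ and why an $f_{1,n-1}$ term can survive. With your $d=1$, the final subtraction of $v_0$ would already remove the degree-one term, contradicting your own description of the remainder. Your parity guess is also false, since $a_0$ has constant term $-q$. That is harmless, though, because the reduction lowers the degree one step at a time anyway.

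More seriously, you correctly single out the integrality in Step~3 as the crux, but you leave it unproved and propose checking the wrong quantity. The extreme coefficients of $L^\ast(p_i)$ are governed by $f_L^{(\pm 2)}$, and only $a_1$ reaches $x^{\pm 2}$. Since $a_1(q^{-1}x)=(1-q)^{-1}\bigl(q^2x^2+(q+3q^2)x-(1+3q)x^{-1}-x^{-2}\bigr)$, you get $f_L^{(2)}(s)=q^{2-s}/(1-q)$ and $f_L^{(-2)}(s)=-q^{-s}/(1-q)$. So the coefficient of $q^{-k(i+2)}$ in $(1-q)L^\ast(p_i)$ is $-q^{i}$, and that of $q^{k(i+2)}$ is $q^{2i+2}$. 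Both are units in $\bZ[q,q^{-1}]$. Together with $(1-q)L^\ast(p_i)\in\bZ[q,q^{-1}][q^k,q^{-k}]$, this is exactly what makes every $c_i(q)$ integral, and it is the fact the paper's proof rests on.

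By contrast, $f_L^{(1)}(s)=(1-q)^{-1}\bigl(-q^3+(q+3q^2)q^{-s}-q^{1-2s}\bigr)$ is never of the form $\pm q^a(1-q)^{-1}$; for instance it equals $(1-q)^{-1}q^2(3-q)$ at $s=0$. Following your plan literally, you would land in your fallback case and obtain only a weakened statement. With $d=2$ and the computation above, your argument goes through as in the paper.
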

\pf
Note that
$f_{2v+1,n-1}^{(-\frac{1}{2})}([k])=q^{(n-1-k)(2v+1)} [2k+1]^{2v+1}\in  S_{-\frac{1}{2}}^{\langle -1 \rangle}$ and $\hdeg{L}=-\ldeg{L}=2$.
Next, we go through the $q$-power partible reduction given in
the proof of Theorem \ref{eq:qpp-process}.
By the definition of $p_i([k])$, one can see $\tilde{p}_i([k])=L^{\ast}(p_i([k]))\in S_{-\frac{1}{2}}^{\langle -1 \rangle}$ for all $i\in\bN$.
Moreover, since
\begin{align*}
  \tilde{p}_i([k])=&q^{-k}(1+q^{k+2})[k+1]p_i([k])+q^{-k+2}(1+q^{k-1})[k]p_i([k-2])\\
           &+(q+q^{k+1}+q^{2k+2}+3q^{k+2})[-2k-
          1]p_i([k-1]).
\end{align*}
We have $(1-q)\tilde{p}_i([k])\in\bZ[q,q^{-1}][q^{k},q^{-k}]$,
$\hdeg \tilde{p}_i(x)=-\ldeg \tilde{p}_i(x)=i+2$ and
\begin{equation*}\label{eq:v_i-degree}
[q^{-k(i+2)}]\left((1-q)\tilde{p}_i([k])\right)=-q^{i}.
\end{equation*}
Lemma \ref{eq:fmb-coe} shows that $q^{-(2v+1)n} (1-q)^{2v+1}f_{2v+1,n-1}^{(-\frac{1}{2})}([k])\in \bZ[q,q^{-1}][q^{k},q^{-k}]$.
So there exists $c_{2v-1}(q)\in \bZ[q,q^{-1}]$ such that
\[
Q_0([k])=q^{-(2v+1)n}(1-q)^{2v+1}f_{2v+1,n-1}^{(-\frac{1}{2})}([k])
-c_{2v-1}(q)(1-q)\tilde{p}_{2v-1}([k])
\]
satisfies
\[Q_0([k])\in  S_{-\frac{1}{2}}^{\langle -1 \rangle}\cap \bZ[q,q^{-1}][q^{k},q^{-k}]\]
and
\[\hdeg{Q_0}(x)=-\ldeg{Q_0(x)}\le 2v.\]
Then there exists $c_{2v-2}(q)\in\bZ[q,q^{-1}]$ such that
\[
Q_1([k])=Q_0([k])-c_{2v-2}(q)(1-q)\tilde{p}_{2v-2}([k])\in  S_{-\frac{1}{2}}^{\langle -1 \rangle}\cap \bZ[q,q^{-1}][q^{k},q^{-k}]
\]
with $\hdeg{Q_1}(x)=-\ldeg{Q_1}(x)\le 2v-1$.
Continuing the above process, we find
\begin{equation}\label{eq:L^ast1}
q^{-(2v+1)n}(1-q)^{2v+1}f_{2v+1,n-1}^{(-\frac{1}{2})}([k])
=\sum_{i=0}^{2v-1}c_i(q)(1-q)L^{\ast}(p_i([k]))
+\tilde{Q}([k])
\end{equation}
for some $c_i(q)\in\bZ[q,q^{-1}]$
and $\tilde{Q}([k])\in S_{-\frac{1}{2}}^{\langle -1 \rangle}\cap\bZ[q,q^{-1}][q^{-k},q^k]$
satisfying $\hdeg\tilde{Q}(x)=-\ldeg\tilde{Q}(x)<2$.
By Proposition \ref{prop:T-symmetry},
\begin{equation}\label{eq:Q}
\tilde{Q}([k])=\tilde{c}(q)(q^{-k}-q^{k+1})=\tilde{c}(q)(1-q)q^{1-n}
f_{1,n-1}^{(-\frac{1}{2})}([k])
\end{equation}
for some $c(q)\in \bZ[q,q^{-1}]$.
Substituting \eqref{eq:Q} into \eqref{eq:L^ast1} and letting $c(q)=\tilde{c}(q)q$ completes the proof.
\qed

\noindent\emph{Proof of Theorem \ref{Pan-generalization}.}
Multiplying both sides of \eqref{eq:L^ast} with $D_k(q)$ and then summing over $k$ from $0$ to $n-1$.
The conclusion comes directly by Corollary \ref{LM:q-Delannoy} and \eqref{eq:Pan2014D1}.
\qed

\noindent \textbf{Acknowledgments.}
This work was supported by the National Natural Science Foundation of China (No. 12101449, 12271511 and 12271403).
All authors contribute equally and the names of the authors are listed in alphabetical order according to their family names.

\end{document}